\newcommand{\R}{\mathbb{R}}
\newcommand{\N}{\mathbb{N}}
\newcommand{\Z}{\mathbb{Z}}
\newcommand{\Ha}{\mathcal{H}}
\newcommand{\e}{\varepsilon}
\def\LM#1{\hbox{\vrule width.2pt \vbox to#1pt{\vfill \hrule width#1pt
height.2pt}}}
\def\LL{{\mathchoice {\>\LM7\>}{\>\LM7\>}{\,\LM5\,}{\,\LM{3.35}\,}}}
\def\mres{{\LL}}
\newcommand{\ol}{\overline}
\newcommand{\Sd}{\mathbb{S}^{d-1}}
\newcommand{\hn}{\mathcal{H}^{d-1}}
\newcommand{\Mnn}{{\mathbb{M}^{d\times d}_{sym}}}
\newcommand{\uxz}{u^\xi_z}
\newcommand{\xz}{^\xi_z}
\newtheorem{theorem}{Theorem}
\newtheorem{lemma}[theorem]{Lemma}
\newtheorem{prop}[theorem]{Proposition}
\newtheorem{corollary}{Corollary}
\theoremstyle{remark}
\newtheorem{remark}{Remark}
\title[A characterization of $G(S)BD$]{A characterization of Generalized \\ Functions of Bounded Deformation}
\author{Antonin Chambolle}
\address{CEREMADE, CNRS, Université Paris-Dauphine, Univ.~PSL, France
  and Mokaplan, INRIA Paris.}
\email[Antonin Chambolle]{antonin.chambolle@ceremade.dauphine.fr}
\author{Vito Crismale}
\address{Dipartimento di Matematica Guido Castelnuovo, Piazzale Aldo Moro 5, 00185 Roma, Italy.}
\email[Vito Crismale]{vito.crismale@mat.uniroma1.it}
\begin{document}
\maketitle

%\subjclass{Primary 49Q20; Secondary 49J45, 26A45, 74R10, 74G65, 70G75}

\begin{abstract}
  We show that Dal~Maso's $GBD$ space, introduced for tackling crack
  growth in linearized elasticity, can be defined by simple conditions
  in a finite number of directions of slicing.
\end{abstract}
%%% technical appendix
\section{Introduction}

In \cite{DM13} G.\ Dal Maso introduced the space of \textit{Generalized (Special) functions of Bounded Deformation} ($G(S)BD$) to characterize the domain of a class of functionals associated to \textit{free discontinuity problems}~\cite{DGA,AFP} in linearized elasticity or elastoplasticity, in particular for the modeling of crack growth~\cite{FM}. In these contexts the energy functionals depend on a variable $u\colon \Omega{\subset} \R^d \to \R^d$, representing the \emph{displacement}, through its symmetrized gradient $e(u)=(Du{+}Du^T)/2$ (the \emph{deformation}) and its $(d{-}1)$-dimensional jump set $J_u$ (the \emph{crack set}), with a constraint imposing - in a weak sense - Dirichlet boundary conditions. %Therefore mi pare strano Therefore here? (AC)
Yet an integrability bound on $u$ in terms of a bound for such functionals is in general not available, so that it is not possible to control the deformation in the space of Radon measures.

An analogous issue arises both in the simplified \emph{antiplane shear} setting, where the displacement is only in the vertical direction so it can be assumed scalar, and in the framework of finite elasticity or elastoplasticity, contexts in which the role of $e(u)$ is played by the full gradient $\nabla u$. This motivated the introduction of the space $G(S)BV$~\cite{Amb90GSBV, Amb94} as the space of functions which are in $(S)BV$ after composition $\psi\circ u$
with $C^1$ functions $\psi$ having $\nabla \psi$ compactly supported.
 
However, $e(\psi\circ u)$ depends on the whole %$\nabla u$,
gradient of $u$, even for $u$ smooth,
which prevents from adopting a similar definition of $G(S)BD$.
The solution found in~\cite{DM13} was to define this space
%This space is in fact defined
% in a more subtle way, 
 by slicing: a measurable function $u:\Omega\to\R^d$ is in $GBD(\Omega)$ if there exists a bounded Radon measure $\Lambda_u$ such that for every $\xi \in \Sd$ and for $\Ha^{d-1}$-a.e.\ $z\in \xi^\perp$ the slice  $s\mapsto \uxz(s):=\xi\cdot u(z+s\xi)$
    has bounded variation in $\Omega\xz:=\{s\in\R: z+s\xi\in\Omega\}$
    and
    \[
     (\widehat{\mu}_u)^\xi(B):= \int_{\xi^\perp} \Big((|D^a \uxz |+|D^c \uxz|)(B\xz) + \sum_{s\in J_{\uxz}\cap B\xz}|[\uxz](s)|\wedge 1\Big) d\Ha^{d-1}(z) \le \Lambda_u(B)
    \]
    for every $B\subset \Omega$ Borel, with $B\xz:=\{s\in\R: z+s\xi\in B\}$.
    Still in~\cite{DM13}, it is shown that it is enough to have the bound for
      all $\xi$ in a dense subset of $\mathbb{S}^{d-1}$. However,
      it is well known that a much simpler characterization exists for
      % Such simplified characterization extends what is known for
      the space $BD$ of functions of \emph{Bounded Deformation}:
      a measurable $u$ is in $BD(\Omega)$ as soon as there is a basis $(e_i)_{i=1}^d$
      of $\R^d$ such as the following holds:
% and $d(d-1)/2$ vectors $\xi_{i,j}\in \text{span}\{e_i\pm e_j\}$, $1\le i<j\le d$  such that
      for any $\xi \in \tilde{V} :=  \{ e_i :{i=1,\dots,d}\} \cup\{ e_i+e_j :{1\le i<j\le d}\}$ and for $\Ha^{d-1}$-a.e.~$z\in \xi^\perp$,
 $\uxz\in BV(\Omega\xz)$ 
% $s\mapsto \uxz(s):=\xi\cdot u(z+s\xi)$
%    has bounded variation in $\Omega\xz:=\{s\in\R: z+s\xi\in\Omega\}$ 
    and
    \[
    |\mathrm{E}u \xi\cdot \xi| (\Omega)= |\mathrm{D}_\xi (u\cdot \xi)|(\Omega)=\int_{\xi^\perp} |D \uxz |(\Omega\xz) d\Ha^{d-1}(z) <+\infty,
    \]
    %  \widetilde{\Lambda}^\xi_u:=
    \textit{cf.}~\cite[Section~3]{AmbCosDM97} and \cite[Chap. II, Section 2.2]{Tem} (see also \cite{TemStr, Tem, Bab15, DePRin16, DePRin20} for general properties of $BD$ functions).
    It is therefore natural to ask whether $GBD$ functions can, similarly,
    be described by conditions in a finite number of appropriate directions.
    Our main result answers this affirmatively.

% Our main result is
We show that a measurable $u$ is in $GBD(\Omega)$ as soon as there exists a basis $(e_i)_{i=1}^d$ of $\R^d$ and $d(d-1)/2$ vectors
    $\xi_{i,j}\in \text{span}\{e_i\pm e_j\}$, $1{\le} i{<}j{\le} d$ such that
    for any 
%    $\xi \in V =  \{e_i, i=1,\dots,d; \xi_{i,j}, 1\le i<j\le d\}$ 
%    $\xi \in V =  (e_i)_{i=1,\dots,d} \cup (\xi_{i,j})_{1\le i<j\le d}$
%%    non so come scrivere questo. Non mi piace un insieme che non è definito
%% con $\{...\}$ e sembra una successione o un d-uple...
 $\xi \in V =  \{e_i:{i=1,\dots,d}\} \cup \{\xi_{i,j}\colon {1\le i<j\le d}\}$
    and
    for $\Ha^{d-1}$-a.e.~$z\in \xi^\perp$, $ \uxz \in BV(\Omega\xz)$
    % has bounded variation in $\Omega\xz:=\{s\in\R: z+s\xi\in\Omega\}$
    and
    \[
     \Lambda^\xi_u:=(\widehat{\mu}_u)^\xi(\Omega) <+\infty.
    \]
    In other words, we show that if we have a control on the
    $d(d+1)/2$ % or "finite number of"
    directions $\xi \in V$ through $\Lambda^\xi_u$, then the same control
%for such a function, then the inequality in the
%theorem 
holds in fact for \textit{all} $\xi\in\mathbb{S}^{d-1}$, and can
be localized on Borel subsets  $B\subset \Omega$ and controlled by  a global
finite measure $\Lambda_u$.

Our strategy is to approximate any measurable $u$ with $\Lambda_u^\xi$ bounded for all $\xi \in V$, by a family of functions, each in a suitable finite dimensional space, satisfying a uniform bound in $GBD$, according to the definition in
\cite{DM13} (Theorem~\ref{thm:main}).
%\cite[Definition~4.1]{DM13}, in Theorem~\ref{thm:main}.
%%%%
%\BBB qui volevo far riferimento al nostro teorema di approssimazione, come per gli altri enunciati. Non avevo messo il riferimento preciso alla Def. di Dal Maso perché l'abbiamo richiamata prima \antonin{si scusa, non avevo capito, la mia modifica era stupida}\EEE
%%%%
More precisely, for any $\e>0$, we choose a family of
% choose a suitable 
discretizations of the domain into hypercubes of sidelength $\e>0$ so that $u$ is approximated in measure by the functions defined in any hypercube as the $d$-linear interpolation over the vertices (see Appendix~\ref{sec:disc}). Then we modify such functions, setting them to 0 on the  ``bad hypercubes'', which are such that there are $\xi \in V$ and a couple of vertices joined by a segment parallel to $\xi$ that intersect the set where the jump of $u$ is larger than 1.
Thanks to an averaging argument, we show that we can choose
  % and restricting the choice of
  the discretizations in such a way that (i) the bound on $\Lambda_u$ gives a uniform control on the perimeter of the union of the ``bad hypercubes'' while (ii) outside this set, the symmetric gradient of the $d$-linear approximation is controlled in $L^1$ by the finite differences along directions in $V$, thus again by the bound on $\Lambda_u$ (see Appendix~\ref{sec:energy}).

{}From this approximation we derive (Corollary~\ref{cor:GBDnew}) that $u\in GBD$
% Theorem~\ref{thm:main} 
by employing a suitable compactness result for bounded sequences in $GBD$ that converge in measure \cite{DM13, Fri19}. This strategy of discretization/reinterpolation for $(G)BD$ functions was first used
in~\cite{Chambolle-SBD2D,Chambolle-SBDND,Iur14}
to show approximation results in $(G)SBD$.

As a further application (Corollary~\ref{cor:comp}), we prove that any sequence of measurable functions satisfying uniformly the control on $(\Lambda_{u_k}^\xi)_{\xi \in V_k}$ is precompact in $GBD$, in the sense of the general compactness and closedness result in \cite{ChaCriIUMJ} (we refer also to 
%\cite{CC-JEMS, ChaCri20AnnSNS, AlmiTasso} 
\cite{AlmiTasso} 
for a related compactness result in $GBD$).

Eventually, we address an open question raised in the recent work \cite{ADKT} (see also \cite{AlmiTasso2}) concerning a nonlocal approximation of Griffith-type energies for brittle cracks (see e.g.\ \cite{FM, CCF16, FriPWKorn, 
%Fri17ARMA, 
FriSol16, CC, CFI19,CCI19,CC-CalcVar, CagChaSca20, FriPerSol21, BabIurLem22, FriLabSti23}). In some approximation regimes any limiting admissible deformation $u$ naturally belong to the space therein called $GBV^{\mathcal{E}}(\Omega; \R^d)$, namely it satisfies the bound 
\[
\int_{\Sd} \widehat{\mu}_u^\xi(B) \,d\Ha^{d-1}(\xi) \leq \Lambda_u(B) \quad \text{for } B\subset \Omega \text{ Borel,}
\]
for a suitable bounded Radon measure $\Lambda_u$. We prove that $GBV^{\mathcal{E}}$ coincide with $GBD$ (Corollary~\ref{cor:GBVE}), in particular we give positive answer to a question in \cite{ADKT} concerning the rectifiability of the measure obtained by integrating over directions $\xi$ the jump parts of the measures $(\widehat{\mu}_u)^{\xi}$.

In the same spirit, we prove  (Proposition~\ref{prop:mainp}) a characterization of the space $GSBD^p$ (i.e.\ the $GBD$ functions with slices in $SBV$, symmetrized gradient in $L^p$ and jump set of finite $\Ha^{d-1}$-measure, see e.g.\ \cite{FriPWKorn}), with a slight adaptation of the arguments in the proof of Theorem~\ref{thm:main}, and the corresponding
% counterpart in $GSBD^p$ of the 
compactness (Corollary~\ref{cor:GSBDp}), employing the general compactness result \cite{ChaCri20AnnSNS}, see also \cite{CC-JEMS}.
\section{Main results}

Here $\Omega$ is a bounded\footnote{for simplicity, since the result could
  easily be localized.} open set of $\R^d$, $d\ge 2$.
% with finite perimeter. \EEE
%(maybe needed for subdivision?) \EEE 
%(Bounded is for simplicity.)
Given $u:\Omega\to\R^d$, for any $\xi\in\R^d\setminus \{0\}$ and
$z\in\xi^\perp$, we define:
\[
\uxz(s) := \xi\cdot u(z+s\xi) \quad \text{for }s\in \Omega\xz:= \{s\in\R: z+s\xi\in\Omega\}.
\] 
%defined
%for $s\in \Omega\xz:= \{s\in\R: z+s\xi\in\Omega\}$.  
We denote by $L^0(\Omega; \R^m)$, $m\ge 1$, the measurable functions from $\Omega$ into $\R^m$ and by $v_k\to v$ in $L^0(\Omega; \R^m)$ we mean that $v_k$ converge to $v$ in $\mathcal{L}^d$-measure in $\Omega$, for $\mathcal{L}^d$ the Lebesgue measure on $\R^d$. 
% (we 
%%refer always to the convergence in $\mathcal{L}^d$-measure, for $\mathcal{L}^d$ the Lebesgue measure on $\R^d$, and similarly when we 
%write for a.e.\ $x$ to indicate $\mathcal{L}^d$-a.e. $x$). 
We also let $Q:=[0,1)^d$  and, for any $\e>0$, 
\[
\Omega_\e:=\{x \in \Omega\colon \mathrm{dist}(x, \partial^* \Omega) > \sqrt{d}\e\}.
\] 
We refer to \cite{AFP} for general theory of $BV$ functions. If $u \in BV$ we denote by $D^a u$ and $D^c u$ the absolutely continuous and the Cantor part of the bounded Radon measure $Du$. Moreover, we refer to \cite{AmbCosDM97} for a careful treatment of the $SBD$ space and to \cite{DM13} for the definition and the main properties of $GBD$ functions.

\begin{theorem}\label{thm:main}
  Let $u \in L^0(\Omega; \R^d)$
 % $u:\Omega\to \R^d$ be a measurable function 
  and assume
  there is an orthonormal basis $(e_i)_{i=1}^d$ of $\R^d$ such that for any $\e>0$ it holds that
  $\xi\in V :=  \{e_i:{i=1,\dots,d}\} \cup \{e_i+e_j\colon {1\le i<j\le d}\}$, and that for
  $\Ha^{d-1}$-a.e.\ $z\in\xi^\perp$, $\uxz\in BV(\Omega\xz)$
  % and there
  % exists a finite, positive Borel measure $\Lambda^\xi$ in $\Omega$
  % such that for any open set   $A\subset\Omega$,
  % \[
  %   \int_{\xi^\perp} \Bigg((|D^au|+|D^cu|)({A\xz}) + \sum_{s\in J_{\uxz}}
  %     |[\uxz(x)]|\wedge 1\Bigg) d\Ha^{d-1}(z)
  %   \le \Lambda^\xi(A).
  % \]
  and
  \begin{equation}\label{Lambdauxi}
    \Lambda_u^\xi :=
     \int_{\xi^\perp} \Bigg((|D^a \uxz|+|D^c \uxz|)({\Omega\xz}) + \sum_{s\in J_{\uxz}}
     |[\uxz(x)]|\wedge 1\Bigg) d\Ha^{d-1}(z) <+\infty.
  \end{equation}
  Then there exist a  dimensional constant $C>0$ and a family $(u^\e)_{\e>0}\subset SBV^\infty(\Omega;\R^d)$ 
  %for every $p \in [1,\infty]$ 
  such that, for any $\e>0$,
   $J_{u_\e}\subset \partial \mathcal{B}^\e$ with $\mathcal{B}^\e$ union of finitely many cubes of sidelength $\e$ included in $\Omega$, 
  %, for $\Omega'_\e:=\{x\colon \mathrm{d}(x, \partial^* \Omega)<2\e\}$,
%  \begin{equation}\label{boundunifapp}
%    \int_{\Omega_\e'} | e(u^\e)| dx +  \Ha^{d-1}(J_{u^\e}\cap\Omega'_\e) 
%     \le 2C  \Lambda_u, \quad \Lambda_u:= \sum_{\xi \in V}\Lambda_u^\xi,
%  \end{equation}
 \begin{equation}\label{boundunifapp}
    \int_{ \Omega_\e } | e(u^\e)| dx +  \Ha^{d-1}(\partial^* \mathcal{B}^\e  \cap \Omega_\e ) 
     \le C\,  \Lambda^V_u, \quad \Lambda^V_u:= \sum_{\xi \in V}\Lambda_u^\xi,
  \end{equation} 
and $u^\e \to  u\in L^0(\Omega; \R^d)$.
\end{theorem}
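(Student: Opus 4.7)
The plan is to follow the discretization-reinterpolation strategy outlined in the introduction. For each $\e>0$ and each translation $t\in[0,\e)^d$, I partition $\R^d$ by the grid $t+\e\Z^d$ into cubes of sidelength $\e$ and, on every cube included in $\Omega$, define $\tilde u^{\e,t}$ as the $d$-linear interpolation of $u$ at the $2^d$ vertices (Appendix~\ref{sec:disc}). I call a cube \emph{bad} when some edge parallel to some $\xi\in V$ determines a slice $\uxz$ with a jump of size $\ge 1$ at the corresponding parameter, and let $\mathcal{B}^{\e,t}$ be the union of bad cubes in $\Omega$. Setting $u^{\e,t}:=\tilde u^{\e,t}$ on $\Omega\setminus\mathcal{B}^{\e,t}$ and $u^{\e,t}:=0$ on $\mathcal{B}^{\e,t}$ produces an element of $SBV^\infty(\Omega;\R^d)$ whose jump set lies in $\partial\mathcal{B}^{\e,t}$, which accounts for the structural part of the conclusion.

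The key analytic step, which I would defer to Appendix~\ref{sec:energy}, is a pointwise bound on each good cube of the form
\[
|e(\tilde u^{\e,t})(x)|\le C\e^{-1}\sum_{\xi\in V}\sum_{(v_-,v_+)}\bigl|\xi\cdot\bigl(u(v_+)-u(v_-)\bigr)\bigr|,
\]
where the inner sum runs over vertex pairs of the cube joined by an edge parallel to $\xi$. The reason this control requires \emph{exactly} the directions $V$ is algebraic: the partial derivatives of a $d$-linear function on a cube are piecewise constant combinations of the vertex differences along coordinate edges, and the symmetric combinations $\partial_i(u\cdot e_j)+\partial_j(u\cdot e_i)$ are then recovered precisely by introducing the differences along $e_i+e_j$. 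Using the slice identity $\xi\cdot(u(v_+)-u(v_-))=D\uxz([v_-,v_+])$ on any edge not crossing a jump, I can integrate the pointwise bound to obtain, on each good cube, an estimate in terms of the measures $|D\uxz|$ restricted to the $V$-parallel edges of the cube.

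To select a good translation $t=t(\e)$, I average in $t\in[0,\e)^d$. By Fubini, each one-dimensional slice $\uxz$ contributes its full variation $|D\uxz|(\Omega\xz)$ to the integrated edge-wise bound, so that summing over cubes yields
\[
\e^{-d}\int_{[0,\e)^d}\int_{\Omega_\e\setminus\mathcal{B}^{\e,t}}|e(\tilde u^{\e,t})|\,dx\,dt\le C\,\Lambda_u^V.
\]
An analogous averaging identifies the expected number of bad cubes along a $\xi$-line with the number of jumps of $\uxz$ of size $\ge 1$, giving $\e^{-d}\int_{[0,\e)^d}\hn(\partial^*\mathcal{B}^{\e,t}\cap\Omega_\e)\,dt\le C\Lambda_u^V$. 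A standard pigeonhole then selects $t=t(\e)$ realizing both estimates simultaneously, which proves \eqref{boundunifapp}.

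Finally, the convergence $u^{\e,t(\e)}\to u$ in $L^0(\Omega;\R^d)$ follows from the fact that the $d$-linear interpolation of a measurable function converges in measure as $\e\to 0$ (Appendix~\ref{sec:disc}), combined with $|\mathcal{B}^{\e,t(\e)}|\le\e\,\hn(\partial^*\mathcal{B}^{\e,t(\e)})\le C\e\,\Lambda_u^V\to 0$. The main obstacle I expect is the pointwise control of $e(\tilde u^{\e,t})$ on good cubes by $V$-direction finite differences only: reconstructing the $d(d+1)/2$ independent entries of the symmetric gradient of the $d$-linear interpolant from exactly $d(d+1)/2$ directional differences, with constants independent of $\e$ and of the vertex data, is the computation that specifically dictates the choice of $V$.
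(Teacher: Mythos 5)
Your proposal follows essentially the same route as the paper: discretize on a translated grid, reinterpolate $d$-linearly, remove bad cubes, control the symmetric gradient on good cubes via a discrete Korn-type inequality on the reference cube (the paper's Proposition~\ref{prop:control}, which gives the $L^1$ version of your pointwise claim — equivalent up to constants since the interpolant is polynomial of bounded degree), average over anchors, and select a good anchor by pigeonhole. Two small points deserve care in a write-up. First, the inequality $|\mathcal{B}^{\e,t}|\le\e\,\hn(\partial^*\mathcal{B}^{\e,t})$ is false in general for a union of $\e$-cubes (a solid $N\times\cdots\times N$ block violates it for $N$ large); the paper instead bounds $|\mathcal{B}^\e|$ directly by (number of bad cubes)$\cdot\e^d$, which the same averaging count delivers, so your conclusion $|\mathcal{B}^\e|\to 0$ stands with the corrected justification. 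Second, the convergence in measure of the $d$-linear interpolant to $u$ is \emph{not} automatic for every anchor: for a merely measurable $u$ the vertex values are pointwise quantities, and Proposition~\ref{prop:disc} only guarantees convergence when $y_\e$ is drawn from a good set $Q_\e\subset Q$ of asymptotically full measure. Your pigeonhole must therefore pick the anchor simultaneously in this set and in the set where the averaged energy bound holds (the paper does exactly this, intersecting $Q^\e$ with $Q_\e$); this is compatible with your argument but should be said explicitly.
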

\begin{remark}\label{rem:nonorth}
    In Theorem~\ref{thm:main}, we assumed $(e_i)_{i=1}^d$ orthonormal for simplicity.
    The result would hold also for an arbitrary basis:
    this is easily checked by considering a matrix $A$ which
    sends $(e_i)_{i=1}^d$ to an orthonormal basis and replacing $u$
    with $A^{-T}u(A^{-1}\cdot)$.
    One could of course also consider
    % , as well as taking
    the integrals in~\eqref{Lambdauxi} over arbitrary spaces $E_\xi$ with
    $E_\xi\oplus \R\xi = \R^d$ rather than just $\xi^\perp$, see \cite[Remark~4.11]{DM13}. Additionally,
    as in~\cite[Remark~4.3]{DM13},
    one could replace in~\eqref{Lambdauxi} the threshold $1$ (in $|[\uxz]|\wedge 1$)
    by an arbitrary ($\xi$-dependent) threshold level $\beta_\xi>0$,
    since $|x|\wedge \alpha \le (1\vee \alpha/\beta)(|x|\wedge \beta)$
    for any $\alpha,\beta>0$.
    Finally one could consider, for some pairs $(i,j)$, slicing in the
    direction $e_i-e_j$ rather than $e_i+e_j$.
  \end{remark}

From Theorem~\ref{thm:main} we deduce the following three consequences. We refer to \cite[Definition~2.3]{ADKT} for the definition of the space $GBV^{\mathcal{E}}(\Omega; \R^d)$, characterized in Corollary~\ref{cor:GBVE}.

\begin{corollary}\label{cor:GBDnew}
For any fixed basis $(e_i)_{i=1}^d$ of $\R^d$ and  $ V :=  \{e_i:{i=1,\dots,d}\} \cup \{e_i+e_j\colon {1\le i<j\le d}\}$ it  holds that
\begin{equation}\label{GBDequiv}
GBD(\Omega)=\{u \in L^0(\Omega; \R^d) \colon \Lambda^V_u < +\infty\},
\end{equation}
where, for any $u \in L^0(\Omega; \R^d)$, $\Lambda^V_u$ is defined  in \eqref{Lambdauxi}, \eqref{boundunifapp}.
\end{corollary}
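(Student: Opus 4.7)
The plan is to prove the two inclusions of \eqref{GBDequiv} separately; the nontrivial content is the inclusion $\supseteq$, which will follow by combining Theorem~\ref{thm:main} with a standard $GBD$-compactness argument. The inclusion $\subseteq$ is routine: if $u\in GBD(\Omega)$ with bounded defining measure $\Lambda_u$, then by definition $(\widehat{\mu}_u)^\eta(\Omega)\le\Lambda_u(\Omega)$ for every $\eta\in\Sd$. Since the vectors $e_i+e_j\in V$ are not unit, one uses the scaling identity $\uxz(s)=|\xi|\,u^{\xi/|\xi|}_z(|\xi|s)$ to deduce $\Lambda_u^\xi\le c(|\xi|)\,\Lambda_u(\Omega)$ for each $\xi\in V$, and summing over $V$ gives $\Lambda^V_u<+\infty$.

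For the converse, I would start from $u\in L^0(\Omega;\R^d)$ with $\Lambda^V_u<+\infty$ and invoke Theorem~\ref{thm:main} to produce the approximating family $(u^\e)_{\e>0}\subset SBV^\infty(\Omega;\R^d)$ with $u^\e\to u$ in $L^0(\Omega;\R^d)$, with $J_{u^\e}\subset\partial\mathcal{B}^\e$, and with the uniform estimate \eqref{boundunifapp}. Each $u^\e\in SBV^\infty\subset GBD$ has vanishing Cantor part on every slice and jump set on $\partial\mathcal{B}^\e$, so its natural defining measure is dominated by $C\bigl(|e(u^\e)|\,\mathcal{L}^d+\hn\mres\partial^*\mathcal{B}^\e\bigr)$; thus \eqref{boundunifapp} yields $\widehat{\mu}_{u^\e}(\Omega_\e)\le C'\Lambda^V_u$ uniformly in $\e$.

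The final step is to fix an arbitrary open $\Omega'\subset\subset\Omega$, observe that $\Omega'\subset\Omega_\e$ for all sufficiently small $\e$, and invoke the compactness/closure result for $GBD$-bounded sequences converging in measure from \cite{DM13, Fri19} to obtain $u\in GBD(\Omega')$ with $\Lambda_u(\Omega')\le C'\Lambda^V_u$. Exhausting $\Omega$ by such $\Omega'$ and using the local nature of the $GBD$ condition then yields $u\in GBD(\Omega)$ with global defining measure of total mass at most $C'\Lambda^V_u$.

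The main obstacle I anticipate is the passage from the estimate \eqref{boundunifapp}, which only controls the relevant quantities on the inner set $\Omega_\e$ and not on $\Omega$ globally, to a global $GBD$ bound on $\Omega$. This is handled cleanly by the exhaustion argument combined with $GBD$-compactness on each $\Omega'\subset\subset\Omega$; beyond Theorem~\ref{thm:main} itself, no new ideas are required.
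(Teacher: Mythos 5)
Your argument for the nontrivial inclusion $\supseteq$ follows essentially the paper's own route: take the $SBV^\infty$ family from Theorem~\ref{thm:main}, observe that \eqref{boundunifapp} gives a uniform $GBD$ bound on $\Omega_\e$ (equivalently, on any $\Omega'\subset\subset\Omega$ for $\e$ small), and pass to the limit via the $GBD$ compactness/closure results for sequences converging in measure \cite{ChaCriIUMJ,DM13,Fri19}, with the convergence in measure forcing the compensating rigid motions to be zero; the exhaustion by $\Omega'\subset\subset\Omega$ handles the interior-only estimate, exactly as in the paper. You additionally spell out the easy inclusion $\subseteq$, including the normalization issue for the non-unit directions $e_i+e_j$, which the paper leaves implicit.
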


%The proof consists in showing that the condition
%in the theorem
%yields the existence of a sequence converging to $u$, and uniformly controlled
%in the $GBD$ sense.
\begin{corollary}\label{cor:comp}
%Let $(e_i)_{i=1}^d$ be a basis of $\R^d$,  $V:=\{e_i \in \R^d \colon i=1,\dots,d, e_i+e_j, 1\le i<j\le d\}$, and recall the definition of $\Lambda_u$ for any $u \in L^0(\Omega; \R^d)$ from \eqref{Lambdauxi}, \eqref{boundunifapp}. Then 
%for every $u_k \in GBD(\Omega)$ such that 
%Let $(e_i)_{i=1}^d$ be a basis of $\R^d$,  $V:=\{e_i \in \R^d \colon i=1,\dots,d, e_i+e_j, 1\le i<j\le d\}$, and recall the definition of $\Lambda_u$ for any $u \in L^0(\Omega; \R^d)$ from . Then 
%for every 
Let $(u_k)_k \subset GBD(\Omega)$ such that for every $k \in \N \setminus \{0\}$ there exists $(e^k)_k$, $e^k:=(e^k_i)_{i=1}^d$ orthonormal basis of $\R^d$ with (recalling \eqref{Lambdauxi}, \eqref{boundunifapp}) 
\begin{equation*}
\sup_{k\in \N} \Lambda^{V_k}_{u_k}<+\infty,
\end{equation*}
for $V_k:=\{e^k_i\colon i=1,\dots,d\} \cup \{e^k_i+e^k_j\colon 1\le i<j\le d\}$. Then
there exist 
a Caccioppoli partition $\mathcal{P}=(P_n)_n$ of $\Omega$, a sequence of piecewise rigid motions   $(a_k)_k$ with
 \begin{subequations}\label{eqs:0808222034}
\begin{equation}\label{2302202219}
a_k=\sum_{n\in \N} a_k^n \chi_{P_n},
\end{equation}
\begin{equation}\label{2202201909}
|a_k^n(x)-a_k^{n'}(x)| \to +\infty \quad \text{for a.e.\ }x\in \Omega, \text{ for all }n\neq {n'},
\end{equation}
\end{subequations}
 and $u\in GBD(\Omega)$ such that, up to a (not relabelled) subsequence, 
\begin{subequations}\label{eqs:0203200917}
\begin{align}
u_k-a_k &\to u \quad \text{a.e.\ in }\Omega, \label{2202201910}\\
\Ha^{d-1}(\partial^* \mathcal{P} \cap \Omega)&\leq \lim_{\sigma\to +\infty}\liminf_{k\to \infty} \,\Ha^{d-1}(J^\sigma_{u_k}).\label{eq:sciSaltoInf}
\end{align}
\end{subequations}
where $J^\sigma_{u_k}:=\{x \in J_{u_k}\colon |[u_k]|(x)\geq \sigma \}$. 
\end{corollary}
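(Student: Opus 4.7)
The plan is to reduce Corollary~\ref{cor:comp} to the general $GBD$-compactness of \cite{ChaCriIUMJ} via Theorem~\ref{thm:main}. Rather than proving $\sup_k \Lambda_{u_k}(\Omega) < +\infty$ directly (which would require a lower semicontinuity statement for $\Lambda$ under $L^0$-convergence), I pass through a diagonal family of $SBV^\infty$ approximations of $u_k$, whose uniform $GBD$-bound is immediate, and transfer the compactness conclusion back to $(u_k)$.

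For each $k$, apply Theorem~\ref{thm:main} to $u_k$ with basis $(e^k_i)_{i=1}^d$ and direction set $V_k$, yielding $(u_k^\e)_{\e>0}\subset SBV^\infty(\Omega;\R^d)$ with $J_{u_k^\e}\subset \partial\mathcal{B}_k^\e$, the bound \eqref{boundunifapp}, and $u_k^\e\to u_k$ in $L^0$ as $\e\to 0$. A standard diagonal extraction yields $\e_k\to 0$ such that $v_k:=u_k^{\e_k}$ satisfies $v_k-u_k\to 0$ in measure on $\Omega$. By one-dimensional slicing of $SBV^\infty$ functions, for every $\xi\in\Sd$ and every Borel $B\subset\Omega_{\e_k}$,
\[
(\widehat{\mu}_{v_k})^\xi(B)\le \int_B |e(v_k)\xi\cdot\xi|\,dx + \hn(\partial^*\mathcal{B}_k^{\e_k}\cap B)\le C\,\Lambda^{V_k}_{u_k}\le C\sup_j \Lambda^{V_j}_{u_j}<+\infty,
\]
so, after exhausting $\Omega$ by $\Omega_{\e_k}\uparrow\Omega$, the sequence $(v_k)$ satisfies a uniform $GBD$-bound.

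Applying the $GBD$-compactness of \cite{ChaCriIUMJ} to $(v_k)$, up to a subsequence I obtain a Caccioppoli partition $\mathcal{P}=(P_n)_n$ of $\Omega$, piecewise rigid motions $(a_k)$ fulfilling \eqref{eqs:0808222034}, and $u\in GBD(\Omega)$ with $v_k-a_k\to u$ a.e.\ in $\Omega$. Since $v_k-u_k\to 0$ in measure, a further subsequence gives $u_k-a_k\to u$ a.e., which is \eqref{2202201910}. The same compactness result also yields
\[
\hn(\partial^*\mathcal{P}\cap\Omega)\le \liminf_k \hn(J_{v_k}\cap\Omega)\le \liminf_k\hn(\partial^*\mathcal{B}_k^{\e_k}\cap\Omega),
\]
exploiting $J_{v_k}\subset \partial\mathcal{B}_k^{\e_k}$.

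To obtain \eqref{eq:sciSaltoInf}, note that by the construction underlying Theorem~\ref{thm:main} the cubes composing $\mathcal{B}_k^{\e_k}$ are precisely those containing a slice-jump of $u_k$ exceeding the threshold $1$ along some $\xi\in V_k$. Invoking Remark~\ref{rem:nonorth} to raise the threshold to any $\sigma>0$, the analogous bad-cube set $\mathcal{B}_k^{\e_k,\sigma}$ has perimeter controlled by the slice-measure of $J^\sigma_{u_k}$, hence by $C\hn(J^\sigma_{u_k})$ up to errors vanishing with $\e_k\to 0$. Taking first $\liminf_k$ and then $\sigma\to\infty$ produces \eqref{eq:sciSaltoInf}. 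The main obstacle is precisely this final bookkeeping step: one must carefully re-examine the construction of the bad cubes at a general threshold $\sigma$ and bound $\hn(\partial^*\mathcal{B}_k^{\e_k,\sigma})$ sharply in terms of the jump mass of $J^\sigma_{u_k}$, keeping all estimates uniform in $k$ as $\e_k\to 0$.
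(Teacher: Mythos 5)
Your argument for the compactness part is essentially the paper's: apply Theorem~\ref{thm:main} to each $u_k$, choose $\e_k\to 0$ diagonally so that $v_k:=u_k^{\e_k}$ is close to $u_k$ in measure, observe that $(v_k)$ satisfies a uniform $GBD$-type bound on every $\Omega_{\e_{\bar k}}$, invoke the compactness theorem of \cite{ChaCriIUMJ} for $(v_k)$, and transfer the a.e.\ convergence $v_k-a_k\to u$ to $u_k-a_k\to u$ using $v_k-u_k\to 0$ in measure. This yields the Caccioppoli partition, the rigid motions, the limit $u$ and \eqref{2202201910}, and matches the paper.

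For \eqref{eq:sciSaltoInf} your route diverges from the paper and has a genuine gap. You try to bound $\hn(\partial^*\mathcal{P})$ by $\liminf_k\hn(J_{v_k})\le\liminf_k\hn(\partial^*\mathcal{B}_k^{\e_k})$ and then relate $\partial^*\mathcal{B}_k^{\e_k}$ to the large jumps of $u_k$, asserting that the bad cubes ``are precisely those containing a slice-jump of $u_k$ exceeding the threshold $1$.'' That is not what Theorem~\ref{thm:main} constructs: a cube $Q^i_\e$ is discarded whenever some finite difference $|\xi\cdot(u(\e y^\e+j+\e\xi)-u(\e y^\e+j))|$ exceeds $1$ along an edge, and this can occur with no individual jump at all --- for instance through accumulation of the absolutely continuous and Cantor parts, or of many small jumps, over a segment of length $\e_k$. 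Consequently $\hn(\partial^*\mathcal{B}_k^{\e_k})$ is controlled by the full quantity $\Lambda^{V_k}_{u_k}$ (diffuse parts included), not by any jump count, and does not tend to $\lim_\sigma\liminf_k\hn(J^\sigma_{u_k})$. Raising the threshold to $\sigma$ via Remark~\ref{rem:nonorth} does not repair this: the resulting perimeter estimate still involves the diffuse parts of $\widehat\mu_{u_k}^\xi$, and making the contribution of those parts negligible requires sending $\e\to 0$ for each \emph{fixed} $(k,\sigma)$, which is incompatible with the diagonal $\e_k$ already chosen to ensure $v_k-u_k\to 0$. The paper sidesteps all of this: once it has established $u_k-a_k\to u$ a.e., it applies the lower semicontinuity argument of \cite[Section~3.3]{ChaCriIUMJ} \emph{directly to the sequence $(u_k)$} (not to the discretized $v_k$), which produces \eqref{eq:sciSaltoInf} without any bookkeeping on the bad cubes. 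You should replace your last paragraph by that step.
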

\begin{corollary}\label{cor:GBVE}
It holds that
\begin{equation*}
GBV^{\mathcal{E}}(\Omega; \R^d)=GBD(\Omega).
\end{equation*}
%\mathrm{G(S)BV}^{\mathcal{E}}(\Omega; \R^d)=\mathrm{G(S)BD}(\Omega).
\end{corollary}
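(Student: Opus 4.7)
The plan is to prove two inclusions and reduce the nontrivial one to Corollary~\ref{cor:GBDnew}.

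The easy inclusion $GBD(\Omega)\subset GBV^{\mathcal{E}}(\Omega;\R^d)$ is immediate from the definitions. If $u\in GBD(\Omega)$ with bounding measure $\Lambda_u$, then $\widehat{\mu}_u^\xi(B)\le \Lambda_u(B)$ for \emph{every} $\xi\in\Sd$ and every Borel $B\subset\Omega$, so integrating over $\Sd$ gives
\[
\int_{\Sd}\widehat{\mu}_u^\xi(B)\,d\Ha^{d-1}(\xi)\le \Ha^{d-1}(\Sd)\,\Lambda_u(B),
\]
which is the defining bound for $GBV^{\mathcal{E}}$ with bounding measure $\Ha^{d-1}(\Sd)\Lambda_u$.

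For the nontrivial inclusion $GBV^{\mathcal{E}}(\Omega;\R^d)\subset GBD(\Omega)$, let $u\in GBV^{\mathcal{E}}$. Applying Fubini to the defining bound $\int_{\Sd}\widehat{\mu}_u^\xi(\Omega)\,d\Ha^{d-1}(\xi)\le \Lambda_u(\Omega)<+\infty$, we obtain an $\Ha^{d-1}$-negligible set $N\subset \Sd$ outside of which $\widehat{\mu}_u^\xi(\Omega)=\Lambda_u^\xi<+\infty$ and the slices $\uxz$ are in $BV(\Omega\xz)$ for $\Ha^{d-1}$-a.e.\ $z\in\xi^\perp$. The goal is to exhibit an orthonormal basis $(e_i)_{i=1}^d$ whose associated family $V=\{e_i\}\cup\{e_i+e_j\}$ produces only good directions: more precisely, such that the unit vectors $e_i$ and $(e_i+e_j)/\sqrt{2}$ all lie in $\Sd\setminus N$. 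A trivial rescaling of slices shows that finiteness of $\Lambda_u^{e_i+e_j}$ is equivalent to finiteness of $\Lambda_u^{(e_i+e_j)/\sqrt{2}}$, so this is the only condition needed in order to guarantee $\Lambda_u^V<+\infty$ and invoke Corollary~\ref{cor:GBDnew} to conclude $u\in GBD(\Omega)$.

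The existence of such a basis follows by a standard genericity argument: parametrizing orthonormal frames by rotations $R\in SO(d)$ and starting from a fixed reference frame $(e_i^0)$, the set of rotations sending at least one of the $d(d+1)/2$ reference directions $e_i^0$ or $(e_i^0+e_j^0)/\sqrt{2}$ into $N$ is Haar-negligible in $SO(d)$, because each map $R\mapsto R\eta$ pushes Haar measure to a measure absolutely continuous with respect to $\Ha^{d-1}$ on $\Sd$, so the preimage of $N$ is Haar-null. A generic $R$ therefore yields the desired basis. As a byproduct, the identification $GBV^{\mathcal{E}}=GBD$ gives the rectifiability of the direction-integrated jump measure raised as a question in \cite{ADKT}, since the jump set of any $GBD$ function is known to be $(d{-}1)$-rectifiable.

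The main (and really only) obstacle is the measure-theoretic selection of the basis; once this is in place, everything else is bookkeeping around the definition of $GBV^{\mathcal{E}}$ and a direct call to Corollary~\ref{cor:GBDnew}.
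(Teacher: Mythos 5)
Your proof is correct and takes essentially the same route as the paper: both reduce the nontrivial inclusion $GBV^{\mathcal{E}}\subset GBD$ to Theorem~\ref{thm:main}/Corollary~\ref{cor:GBDnew} by an averaging argument over $SO(d)$ (resting on the same Haar-measure pushforward observation) to select a good orthonormal frame. The only cosmetic difference is that the paper picks a rotation below the $SO(d)$-mean of $\sum_{\xi\in\widehat{V}}\Lambda_u^{R\xi}$, whereas you select one avoiding the $\Ha^{d-1}$-null set where $\Lambda_u^\xi=+\infty$; either suffices.
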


We now provide analogous results for the space $GSBD^p$ of $GBD$ functions with symmetrized gradient in $L^p$ and jump set of finite $\Ha^{d-1}$-measure, see e.g.\ \cite{CC}.
\begin{prop}\label{prop:mainp}
  Let $u \in L^0(\Omega; \R^d)$, $p>1$,
 % $u:\Omega\to \R^d$ be a measurable function 
  and assume
  there is an orthonormal basis $(e_i)_{i=1}^d$ of $\R^d$ such that for any $\e>0$ it holds that
  $\xi\in V :=\{e_i\colon i=1,\dots,d\} \cup \{e_i+e_j\colon 1\le i<j\le d\}$, and
  almost $z\in\xi^\perp$, $\uxz\in SBV^p(\Omega\xz)$ with $D^a \uxz=(\uxz)'\mathcal{L}^1$
  and
  \begin{equation}\label{Lambdauxip}
    \Lambda_u^{p,\xi} :=
     \int_{\xi^\perp} \Big( \int_{\Omega\xz}|(\uxz)'|^p \, dt + \#J_{\uxz} \Big) d\Ha^{d-1}(z) <+\infty.
  \end{equation}
  Then there exist a  constant $C>0$ depending only on $d$, $p$, and a family $(u^\e)_{\e>0}\subset SBV^\infty(\Omega;\R^d)$ 
  %for every $p \in [1,\infty]$ 
  such that, for any $\e>0$, $J_{u_\e}\subset \partial \mathcal{B}^\e$ with $\mathcal{B}^\e$ union of finitely many cubes of sidelength $\e$ included in $\Omega$,
  %, for $\Omega'_\e:=\{x\colon \mathrm{d}(x, \partial^* \Omega)<2\e\}$,
%  \begin{equation}\label{boundunifapp}
%    \int_{\Omega_\e'} | e(u^\e)| dx +  \Ha^{d-1}(J_{u^\e}\cap\Omega'_\e) 
%     \le 2C  \Lambda_u, \quad \Lambda_u:= \sum_{\xi \in V}\Lambda_u^\xi,
%  \end{equation}
 \begin{equation}\label{boundunifappp}
    \int_{ \Omega_\e } | e(u^\e)|^p dx +  \Ha^{d-1}(\partial^* \mathcal{B}^\e  \cap \Omega_\e ) 
     \le C\,  \Lambda_u^{p,V}, \quad \Lambda^{p,V}_u:= \sum_{\xi \in V}\Lambda_u^{p,\xi},
  \end{equation} 
and $u^\e \to  u\in L^0(\Omega; \R^d)$.
In particular $u \in GSBD^p(\Omega)$.
%\end{equation} $V_k$ is a sequence of orthonormal basis such that
\end{prop}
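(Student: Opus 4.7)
The plan is to repeat the discretization/interpolation strategy of Theorem~\ref{thm:main}, with three adaptations to the $L^p$/finite-jump-set setting. As before, for each $\varepsilon>0$ I would cover $\Omega$ by a translated grid of cubes of sidelength $\varepsilon$, define $u^\varepsilon$ on each cube as the $d$-linear interpolation of the vertex values of $u$, and then set $u^\varepsilon\equiv 0$ on the union $\mathcal{B}^\varepsilon$ of the ``bad'' cubes. The translation of the grid will be chosen via an averaging argument so that on most grids the estimates below hold up to a universal constant.

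The first adaptation concerns the selection of bad cubes. In the $GSBD^p$ setting it is natural to call a cube \emph{bad} as soon as it has an edge (parallel to some $e_i$) or a face diagonal (parallel to some $e_i+e_j$) that crosses any jump point of the corresponding slice $\uxz$, regardless of the size of the jump. Averaging the indicator of bad cubes over translations of the grid, the perimeter $\Ha^{d-1}(\partial^*\mathcal{B}^\varepsilon\cap\Omega_\varepsilon)$ is controlled by $C\sum_{\xi\in V}\int_{\xi^\perp}\#J_{\uxz}\,d\Ha^{d-1}(z)\le C\Lambda_u^{p,V}$, exactly as in the $L^1$ case but now without needing the threshold~$1$.

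The second adaptation is the $L^p$ estimate. On a non-bad cube, each slice of $u$ along an edge or face diagonal in direction $\xi\in V$ lies in $W^{1,p}$, with derivative equal to $(\uxz)'$. The partial derivatives of the $d$-linear interpolation are piecewise constant combinations of finite differences along these edges and diagonals; by Jensen's inequality each such finite difference raised to the power $p$ is controlled by the mean over the corresponding segment of $|(\uxz)'|^p$. The symmetrized gradient $e(u^\varepsilon)$ is then estimated by a linear combination of such terms, and integrating over the cube and summing, together with the averaging over translations, yields
\[
\int_{\Omega_\varepsilon}|e(u^\varepsilon)|^p\,dx\le C\sum_{\xi\in V}\int_{\xi^\perp}\int_{\Omega\xz}|(\uxz)'|^p\,dt\,d\Ha^{d-1}(z)\le C\,\Lambda_u^{p,V}.
\]
The convergence $u^\varepsilon\to u$ in $L^0$ follows from the measure-theoretic properties of the interpolation operator (Appendix~\ref{sec:disc}), which is independent of $p$, and the fact that the measure of $\mathcal{B}^\varepsilon$ vanishes as $\varepsilon\to 0$.

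For the final conclusion $u\in GSBD^p(\Omega)$, I would invoke the compactness result of~\cite{ChaCri20AnnSNS} in place of the $GBD$ compactness used in Corollary~\ref{cor:GBDnew}: the sequence $(u^\varepsilon)\subset SBV^\infty\subset GSBD^p$ satisfies the uniform bound \eqref{boundunifappp} and converges in measure to $u$, so up to a sequence of piecewise rigid motions (which is in fact trivial because convergence in measure already holds) the limit $u$ belongs to $GSBD^p(\Omega)$ and \eqref{Lambdauxip} identifies the relevant part of its symmetrized gradient and jump set. The main obstacle I anticipate is the careful bookkeeping in the $L^p$ step: verifying that the piecewise-constant representation of $\partial_i u^\varepsilon$ on each cube, combined with the Jensen-type inequality and the averaging over translations, indeed produces the clean bound by $\Lambda_u^{p,\xi}$ without extra $\varepsilon$-dependent constants, and that the set $V=\{e_i\}\cup\{e_i+e_j\}$ is rich enough to control every component of $e(u^\varepsilon)$ pointwise on non-bad cubes.
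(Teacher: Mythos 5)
Your proposal is essentially sound and follows the same discretization/interpolation strategy as the paper. One point deserves emphasis, though, because it is where your reading is actually \emph{more careful} than the paper's terse sketch. The paper says ``replacing $|\xi\cdot(\dots)|$ by $|\xi\cdot(\dots)|^p$ to define $u^\e$,'' which, read literally, keeps the threshold criterion $|\xi\cdot(u(\e y^\e{+}j{+}\e\xi)-u(\e y^\e{+}j))|\le 1$ for deciding whether a cube is good (since $|\cdot|^p\le 1\Leftrightarrow|\cdot|\le 1$). But that criterion alone does \emph{not} yield \eqref{boundunifappp}: rescaling Proposition~\ref{prop:control} gives $\int_{Q_\e^i}|e(u^\e)|^p\,dx\le C\e^{d-p}\sum_{\xi,j}|\xi\cdot(\dots)|^p$, and on a good cube under the threshold criterion the finite difference can still be dominated by a jump of amplitude below $1$ rather than by the absolutely continuous derivative; combining $\e^{d-p}$ with the averaging bound $\e^{d-1}\sum(\cdot\wedge 1)\le 2M$ produces the non-uniform factor $\e^{1-p}$. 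Your criterion --- declare a cube bad as soon as some edge or face diagonal in a direction of $V$ crosses a jump of the corresponding slice --- is exactly what fixes this: on good cubes every relevant one-dimensional slice is $W^{1,p}$, so Jensen gives $|\uxz(s{+}\e)-\uxz(s)|^p\le\e^{p-1}\int_s^{s+\e}|(\uxz)'|^p\,dt$, and the factor $\e^{p-1}$ cancels against $\e^{d-p}$ to leave $\e^{d-1}$, which is the right power for the averaging. (You slightly abbreviate this as ``controlled by the mean''; the precise statement is ``controlled by $\e^{p-1}$ times the integral over the segment,'' and the $\e$'s are what make the bookkeeping close.) The perimeter bound for the new bad set follows from the averaging of $\#J_{\uxz}$ exactly as you say, and the conclusion $u\in GSBD^p$ via \cite{ChaCri20AnnSNS} is the intended route. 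So your filling-in of the paper's sketch is correct, and it identifies precisely the one place where the $p>1$ argument genuinely diverges from the $p=1$ one.
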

\begin{corollary}\label{cor:GSBDp}
With the notation of Proposition~\ref{prop:mainp}, let $(u_k)_k \subset GSBD^p(\Omega)$, $(e^k)_k$, $(V_k)_k$ with $e^k:=(e^k_i)_{i=1}^d$ orthonormal basis of $\R^d$, $V_k:=\{e^k_i \colon i=1,\dots,d\} \cup \{e^k_i+e^k_j\colon 1\le i<j\le d\}$ be sequences such that 
\[
\sup_{k\in \N} \Lambda^{p,V_k}_{u_k}<+\infty.
\] Then there exist a Caccioppoli partition $\mathcal{P}=(P_n)_n$ of $\Omega$, a sequence of piecewise  rigid motions 
%  infinitesimal rigid functions 
  $(a_k)_k$ with
  \begin{subequations}
\begin{equation}\label{2302202219'}
a_k=\sum_{n\in \N} a_k^n \chi_{P_n}\,,
\end{equation}
%\begin{subequations}\label{eqs:0203200917}
%\begin{equation}\label{2202201909}
%|a_h^j-a_h^i|_{L^\infty(B_1)} \to +\infty \quad \text{for all }i\neq j\,,
%\end{equation}
\begin{equation}\label{2202201909'}
|a_k^n(x)-a_k^{n'}(x)| \to +\infty \quad \text{for a.e.\ }x\in \Omega, \text{ for all }n\neq n'\,,
\end{equation}
\end{subequations}
and $u\in GSBD^p(\Omega)$ such that, up to a (not relabelled) subsequence, 
\begin{subequations}
\begin{align}
u_k-a_k \to u \quad &\text{a.e.\ in }\Omega\,, \label{2202201910'}\\
e(u_k)  \rightharpoonup e(u) \quad &\text{in } L^p(\Omega; \Mnn)\,,\label{eq:convGradSym}\\
\hn(J_u \cup \partial^* \mathcal{P} )\leq \liminf_{k\to \infty} \,&\hn(J_{u_k})\,.\label{eq:sciSalto}
\end{align}
\end{subequations}
%\end{subequations}
% (recalling \eqref{Lambdauxi}, \eqref{boundunifapp}) 
%\begin{equation}
%\sup_{k\in \N} \Lambda^{V_k}_{u_k}<+\infty,
\end{corollary}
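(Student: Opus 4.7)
The plan is to derive, from the hypothesis $\sup_k \Lambda^{p,V_k}_{u_k}<+\infty$, uniform $L^p$ estimates on $e(u_k)$ and uniform $\hn$ estimates on $J_{u_k}$, and then to invoke the general compactness and closedness result in $GSBD^p$ of \cite{ChaCri20AnnSNS} (see also \cite{CC-JEMS}). Since each $u_k\in GSBD^p(\Omega)$ by assumption, the standard slicing formulas for $GSBD^p$ (see \cite{DM13}) give, for every $\xi\in V_k$,
\begin{equation*}
\int_{\xi^\perp} \int_{\Omega^\xi_z} |((u_k)^\xi_z)'|^p\, ds\, d\hn(z) = \int_\Omega |e(u_k)\xi\cdot\xi|^p\, dx,
\end{equation*}
\begin{equation*}
\int_{\xi^\perp} \#J_{(u_k)^\xi_z}\, d\hn(z) = \int_{J_{u_k}} |\nu_{u_k}\cdot \xi|\, d\hn .
\end{equation*}

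Next, the set $V_k$ has cardinality $d(d+1)/2=\dim\Mnn$, and the rank-one symmetric tensors $\{\xi\otimes\xi:\xi\in V_k\}$ form a basis of $\Mnn$; since each $(e^k_i)_{i=1}^d$ is orthonormal, the change of coordinates sending this basis to the one associated with a fixed reference orthonormal basis is an isometry of $\Mnn$ and of $\R^d$. Hence there exists a dimensional constant $c_d>0$, \emph{independent of} $k$, with
\begin{equation*}
\sum_{\xi\in V_k} |A\xi\cdot\xi|^p \ge c_d\, |A|^p \quad \forall A\in\Mnn, \qquad \sum_{\xi\in V_k} |\nu\cdot\xi| \ge c_d\, |\nu| \quad \forall \nu\in\Sd.
\end{equation*}
Summing the two slicing identities over $\xi\in V_k$ and combining with these estimates yields
\begin{equation*}
\sup_k \Bigl( \|e(u_k)\|_{L^p(\Omega;\Mnn)}^p + \hn(J_{u_k}) \Bigr) \le C_d\, \sup_k \Lambda^{p,V_k}_{u_k} < +\infty.
\end{equation*}

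With these uniform $GSBD^p$ bounds at hand, the compactness and closedness theorem of \cite{ChaCri20AnnSNS} provides, up to a subsequence, a Caccioppoli partition $\mathcal{P}=(P_n)_n$ of $\Omega$, piecewise rigid motions $a_k=\sum_n a_k^n\chi_{P_n}$ satisfying \eqref{2302202219'}--\eqref{2202201909'}, and a limit $u\in GSBD^p(\Omega)$ for which \eqref{2202201910'}, \eqref{eq:convGradSym}, and \eqref{eq:sciSalto} hold. The main point requiring care is the $k$-uniformity of the constant $c_d$; this is what makes the orthonormality assumption on each $(e^k_i)_i$ crucial, since without it the norm equivalence between $A\mapsto (A\xi\cdot\xi)_{\xi\in V_k}$ and $|A|$ would depend on the conditioning of the $k$-th basis and could degenerate along the sequence.
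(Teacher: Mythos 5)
Your approach is genuinely different from the paper's: you try to extract uniform $GSBD^p$ bounds on $u_k$ \emph{directly} from the slicing hypothesis, whereas the paper instead invokes Proposition~\ref{prop:mainp} to replace each $u_k$ by an approximant $\tilde u_k = u_k^{\e_k}$ that is close to $u_k$ in measure and satisfies the uniform bound~\eqref{boundunifappp} \emph{by construction}; compactness (on $\Omega_{\e_{\overline k}}$, with a diagonalization) is applied to $(\tilde u_k)_k$, the a.e.\ convergence is then transferred back to $u_k$, and \eqref{eq:convGradSym}--\eqref{eq:sciSalto} are obtained afterwards from~\cite[Theorem~11.3]{DM13}. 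The paper's route sidesteps exactly the quantitative slicing estimates you try to prove.

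There is a concrete gap in your argument. Your second slicing identity is wrong: for $\xi\in V_k$ the slice $s\mapsto\xi\cdot u_k(z+s\xi)$ only jumps where $[u_k]\cdot\xi\neq 0$, so the correct identity reads
\[
\int_{\xi^\perp} \#J_{(u_k)^\xi_z}\, d\Ha^{d-1}(z) \;=\; \int_{J_{u_k}} |\nu_{u_k}\cdot\xi|\,\chi_{\{[u_k]\cdot\xi\neq 0\}}\, d\Ha^{d-1}.
\]
Consequently, summing over $\xi\in V_k$ the inequality you actually need is
\[
\sum_{\xi\in V_k} |\nu\cdot\xi|\,\chi_{\{a\cdot\xi\neq 0\}} \;\ge\; c_d \qquad \text{for all } \nu, a\in\Sd,
\]
not merely $\sum_{\xi}|\nu\cdot\xi|\ge c_d$, which is what you state. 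The stronger statement is in fact true (a lower-semicontinuity/compactness argument reduces it to showing that no pair of vectors $(\nu,a)$ can satisfy, for every $\xi\in V$, ``$\nu\cdot\xi=0$ or $a\cdot\xi=0$''; one checks that the structure of $V$ forbids this, using that the $e_i$ alone are a basis and analyzing the cross terms $e_i+e_j$), but it is not immediate, and without it the bound $\sup_k\Ha^{d-1}(J_{u_k})<+\infty$ is not established. So your proof does not go through as written. A secondary caveat: you should be explicit that the slicing identities you invoke require $|\xi|=1$ (a harmless rescaling for the diagonal vectors $e_i^k+e_j^k$), and you should note, as the paper does, that the compactness theorem of~\cite{ChaCri20AnnSNS} by itself typically yields only the partition, the rigid motions, a.e.\ convergence and $u\in GSBD^p$, while~\eqref{eq:convGradSym}--\eqref{eq:sciSalto} need the complementary argument via~\cite[Theorem~11.3]{DM13} (together with~\cite[Lemma~2.1]{Fri19}). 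Once the corrected geometric inequality is supplied, your route would give a more direct proof and in particular a clean a priori estimate $\|e(u_k)\|_{L^p}^p+\Ha^{d-1}(J_{u_k})\le C_d\,\Lambda^{p,V_k}_{u_k}$, which the paper's argument does not claim.
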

From now on we prove the announced results, following the order of presentation.
\begin{proof}[Proof of Theorem~\ref{thm:main}]
   For
   $\xi\in V$ and $z\in \xi^\perp$ such that $\uxz\in BV(\Omega\xz)$,
   we denote by $\mu\xz$ the measure (on $\Omega\xz$):
   %\antonin{[questa notazione sembra diversa
   %  di prima?]} \BBB Prima era integrata su $z$ 
   \[
     \mu\xz =  |D^a \uxz| + |D^c \uxz| + \sum_{s\in J_{\uxz}}
     (|[\uxz](s)|\wedge 1)\Ha^0\mres \{s\}.
   \]
Fixed $\e>0$, we evaluate the integral
  \[
    \e^{d-1}  \int_{Q} \sum_{\xi\in V}\sum_{i\in\e\Z^d}\big(|\xi\cdot(u(\e y + i + \e\xi)-u(\e y+ i))|\wedge 1 \big) dy
  \]
  where here the sum is over $i\in\e\Z^d$ such that
  $\e y+ i\in \Omega\cap (\Omega-\e\xi)$.
  Given $\xi\in V$, then the change of variable $x=i+\e y$ shows that:
  \begin{multline*}
    \e^{d-1}  \int_{Q}\sum_{i\in\e\Z^d} \big(|\xi\cdot(u(\e y + i + \e\xi)-u(\e y+ i))|\wedge 1 \big) dy
    \\=
    \e^{-1}  \int_{\Omega\cap (\Omega-\e\xi)} \big(|\xi\cdot(u(x+\e\xi)-u(x))|\wedge 1 \big)dx,
   \end{multline*}
   and writing $x=z+s\xi$, $z\in\xi^\perp$, $s\in\Omega\xz\cap (\Omega\xz-\e)$
   we find that this is also expressed as
   \[
     \e^{-1}\int_{\xi^\perp}\int_{\Omega\xz\cap (\Omega\xz-\e)}  \big(
     |\uxz(s+\e)-\uxz(s)|\wedge 1 \big)|\xi|ds \,d\Ha^{d-1}(z).
   \]
%   \BBB Remove $|\xi|$ in the integrals since $\uxz(s) := \xi\cdot u(z+s\xi)$, right? 
%   \antonin{Ma sei sicuro? mi pare che venga da  $x=z+s\xi$
%     quindi $dx = d\Ha^{d-1}z\otimes |\xi|ds $? Non vedo come toglierlo,
%     ma è strano hai ragione.}
   Now, since (for $\Ha^{d-1}$-a.e.~$z$) $\uxz\in BV(\Omega\xz)$, then
   for a.e.~$s$:
   either there is $s'\in (s,s+\e)\cap J_{\uxz}$  with $|[\uxz](s)|\ge 1$,
   and
   then clearly $|\uxz(s+\e)-\uxz(s)|\wedge 1\le\mu\xz((s,s+\e))$,
   or there isn't, in which case,
   $\mu\xz\mres (s,s+\e)=|D\uxz|\mres (s,s+\e)$, which implies
   that $|\uxz(s+\e)-\uxz(s)|\le \mu\xz((s,s+\e))$.
   It follows that
   \begin{multline*}
     \int_{\Omega\xz\cap (\Omega\xz-\e)} 
     \Big(|\uxz(s+\e)-\uxz(s)|\wedge 1\Big)|\xi|\,ds
     \\ \le |\xi| \int_{\Omega\xz\cap (\Omega\xz-\e)} 
     \int_{}  \chi_{(s,s+\e)}(t) d\mu\xz(t) \, ds 
     \le \e|\xi|\int_{\Omega\xz} d\mu\xz(t)
   \end{multline*}
   where we have used Fubini's theorem and that $\chi_{(s,s+\e)}(t)=
   \chi_{(t-\e,t)}(s)$.
   Combining the previous estimates, we end up with
   \begin{multline*}
     \e^{d-1}  \int_{Q} \sum_{\xi\in V}\sum_{i\in\e\Z^d}\big(|\xi\cdot(u(\e y + i + \e\xi)-u(\e y+ i))|\wedge 1 \big) dy
     \\ \le \sum_{\xi\in V}|\xi|\int_{\xi^\perp}\int_{\Omega\xz} d\mu\xz d\Ha^{d-1}(z) \le \sum_{\xi\in V}|\xi|\Lambda_u^\xi %(\Omega)
     =:M.
   \end{multline*}
%   \paragraph{Comment:} Non c'\`e veramente bisogno delle
%   misure $\Lambda^\xi$ ma solo di un bound globale, \`e normale?}
   Hence, there is a set $Q^\e\subset Q$ of measure at least $1/2$ such
   that for any $y\in Q^\e$:
   \begin{equation}
          \e^{d-1} \sum_{\xi\in V}\sum_{i\in\e\Z^d}\big(|\xi\cdot(u(\e y + i + \e\xi)-u(\e y+ i))|\wedge 1 \big) \le 2M.
   \end{equation}
  Let us choose $y^\e \in Q^\e \cap Q_\e$, where $Q_\e\subset Q$ is the set given by Proposition~\ref{prop:disc} in Appendix~\ref{sec:disc}. 
% then shows
%   that, for all
%   $\e$ small enough, one can additionally choose $y^\e\in Q^\e$ such that
%   each coordinate of the discretized vector displacement $u$ converges
%   (etc).
 For $\e>0$ small enough  and for any $i \in \e \Z^d$ we denote by
 \[
 Q_\e^i:=\e y^\e+i+\e [0,1)^d
 \] the cube corresponding to $i$,  by
% \[
%   \mathcal{V}_\xi(Q_\e^i):=\big\{j \in \e\Z^d \colon \partial 
%Q_\e^i \cap \big( \e y^\e+j+(0,\e)\xi\big)\neq  \emptyset\big\}
%\] 
\[
   \mathcal{V}_\xi(Q_\e^i):=\big\{j \in \e\Z^d \colon \e y^\e+j+[0,\e)\xi \text{ is an edge of } Q_\e^i \big\}
\] 
the vertexes of $Q_\e^i$ which are endpoints of edges parallel to $\xi$ (more precisely, we consider for any such edge the lowest endpoint, with respect to the order in $\Z^d$, that is the one included in $Q_\e^i$), and 
  we define the function $u^\e$ for every $x \in \Omega$ as
   \begin{itemize}
   \item the multilinear interpolation of the values  of $u$ 
   % $u^\e$ 
   at the vertices
   of the cube  $Q_\e^i$ 
   % $\e y^\e+i+\e [0,1)^d$, %  [-\frac12,\frac12]^d$, \EEE
%   \footnote{\antonin{Perché $[-1/2,1/2]^d$? Mi pare proprio che consideriamo
%     solo vertici $\e$-integrali?}}
   %($i\in\e \Z^d$) 
   which contains $x$, if  $\ol Q_\e^i \subset \Omega$ 
   %these vertices are all in $\Omega$ 
   and if 
   % the differences
   for all $\xi\in V$ and  $j \in \mathcal{V}_\xi(Q_\e^i)$ 
   % any pair of vertices $(j,j+\xi)$ of the cube,
     \[
       |\xi\cdot (u(\e y^\e + j + \e\xi)-u(\e y^\e + j) )|\leq 1 %\quad \text{for all }\xi \in V
       ;
     \] 
%     for all pairs
%     of vertices whose difference is in $\e V$, are all less than $1$.
   \item $0$, else.
   \end{itemize}
   Observe that  $u^\e$ is a polynomial in the cubes of the first type, so in particular it does not jump therein, that it is continuous accross a facet between two such cubes, and that the number of cubes of the second type
   is bounded by 
%   $2^{d-2}\times \frac{2^{d-1}M}{\e^{d-1}}$
$(2/\e)^{d-1} 2M$
    so that,  denoting by $\mathcal{B}^\e$ the union of cubes of the second type, it holds that 
   \begin{equation}\label{controllobadcubes}
   J_{u^\e}\subset \partial \mathcal{B}^\e, \quad \Ha^{d-1}(\partial \mathcal{B}^\e)\leq CM, \quad |\mathcal{B}^\e|\leq CM\e
 \end{equation}
% \antonin{!! Non sono d'accordo con questa stima, o pi\`u precisamente sono
%   d'accordo ma non penso che sia la cosa giusta da scrivere. Il fatto che $\Omega$
%   abbia perimetro finito \`e irrelevante, la stima vale in $\{ x\in \Omega,
%   \dist(x\partial\Omega)>\sqrt{d}\e\}$ e ci basta per la conclusione. Se preferisci
%   fissiamo prima un insieme $\Omega'\subset\subset\Omega$ e consideriamo $\e$
%   abbastanza piccolo. O scriviamo la stima in $\Omega_{\sqrt{d}\e}$, l'insieme qui sopra?
%   Alla fine la $C$ dipende solo dalla dimensione, non di $\Omega$. Capisco che per
%   scrivere bene un teorema di approssimazione, quello che dico non sia veramente
%   giusto. A quel punto mi parerebbe meglio enunciare il teorema con $\Omega_{\sqrt{d}\e}$
%   e aggiungere che nel caso in qui $\Omega$ ha perimetro finito si puo\` applicare
%   il risultato alla funzione $GSBD$ che fa $0$ fuori di $\Omega$...} 
%%   the volume  of their union \EEE
%%   is less than $CM\e$  and their total perimeter is less than $C M$, 
%   \EEE 
 for  a dimensional $C>0$.
 In particular, the convergence properties of the multilinear interpolations
 stated in Proposition~\ref{prop:disc} also hold for $u^\e$, since they differ, in $\Omega$,
 on a set of vanishing measure.
 In addition, % Moreover,
 if  $Q_\e^i$ 
 % $\widehat{Q}_\e=\e y^\e+i+\e [0,1)^d$ % [-\frac12,\frac12]^d\EEE$
   is a cube of the first type, 
%   denoting by \[
%   \mathcal{V}_\xi(\widehat{Q}_\e):=\big\{j \in \e\Z^d \colon \partial 
%\widehat{Q}_\e \cap \big( \e y^e+j+(0,\e)\xi\big)\neq  \emptyset\big\}
%\] 
%the vertexes of $\widehat{Q}_\e$ which are endpoints of edges parallel to $\xi$ (more precisely, we consider for any such edge the lowest endpoint, with respect to the order in $\Z^d$), 
we have that
%    \begin{equation*}
%    \int_{\overline{Q}_\e} |e(u^\e)| dx\leq C \e^{d-1} \sum_{\xi\in V}|\xi\cdot(u(\e y^\e + i + \e\xi)-u(\e y^\e+ i))| 
%    \end{equation*}
\begin{equation*}
    \int_{Q_\e^i} |e(u^\e)| dx\leq C \e^{d-1}  \sum_{\xi\in V}\sum_{j\in \mathcal{V}_\xi(Q_\e^i)}|\xi\cdot(u(\e y^\e + j + \e\xi)-u(\e y^\e+ j))|, 
    \end{equation*}
%     \sum_{\substack{\xi\in V\\ j\in \e\Z^d \colon (j,j+\xi)\cap \partial 
%\overline{Q}_\e \neq \emptyset}}
    % \sum_{(j_1, j \in \e(y^\e+ \Z^d) \colon (j,j+\e\xi)\text{ edge of } \overline{Q}_\e}
    by Proposition~\ref{prop:control} (in Appendix~\ref{sec:energy}) for $p=1$, and a change of variables.
    
    Therefore,  summing the above two estimates over the cubes all included in $\Omega$
    (which cover $\Omega_\e$),
     %, for  $\Omega'_\e:=\{x\colon \mathrm{d}(x, \partial^* \Omega)<2\e\}$ \EEE
     % any fixed \EEE
     % now consider 
     %$\Omega'\subset\subset\Omega$, 
     we find
   that, for $\e$ small enough, $u^\e\in  SBV^\infty(\Omega)$ 
   %for every $p\in [1,+\infty]$ 
   with
%   \[
%     \int_{\Omega'} | e(u^\e) \EEE| dx + \int_{J_{u^\e}\cap\Omega'} (|[u]|\wedge 1)d\Ha^{d-1}
%     \le 2CM,
%   \]
% \[
%     \int_{\Omega'} | e(u^\e) \EEE| dx +  \Ha^{d-1}(J_{u^\e}\cap\Omega') 
%     \le 2CM,
%   \]
 \[
     \int_{ \Omega_\e } | e(u^\e) | dx +  \Ha^{d-1}(J_{u^\e}\cap  \Omega_\e ) 
     \le 2CM,
   \]
%   for some dimensional constant $C$.
for a  dimensional  constant $C>0$, and this gives \eqref{boundunifapp} recalling that $M=\sum_{\xi\in V}|\xi|\Lambda_u^\xi$.
 % In case the vectors $(e_i)_{i=1}^d$ are not orthonormal, denoting by $A$ the
 % linear function such that $Ae'_i=e_i$ for some orthonormal basis $(e'_i)_{i=1}^d$,
 % we observe that the function $y\mapsto A^T u(Ay)$ satisfies the assumptions of
 % the theorem with an orthonormal basis. Then we repeat the argument confirming \eqref{boundunifapp}, where now $C$ depends also on $\max_{\xi \in V} |\xi|$. This concludes the proof.
\end{proof}

\begin{proof}[Proof of Corollary~\ref{cor:GBDnew}]
Let $(u^\e)_{\e>0}$ be the sequence provided by Theorem~\ref{thm:main}
 and fix $\Omega'\subset \subset \Omega$.  By the slicing properties of $BV$ functions and the bound \eqref{boundunifapp} it follows that defining  
%   $\Lambda^\e:=  |e(u^\e)|dx + (|[u]|\wedge 1)  \Ha^{d-1}\mres J_{u^\e}$,
$\Lambda^\e:=  |e(u^\e)|dx + \Ha^{d-1}\mres \partial \mathcal{B}^\e$, 
   then $\Lambda^\e( \Omega' )\le C\, \Lambda_u$ and
   one has, for all $\xi\in \mathbb{S}^{d-1}$ and all Borel $B\subset  \Omega' $,
%   \[
%     \int_{\xi^\perp} \left( \int_{B\xz\setminus J_{u^\e\xz}} | e(u^\e\xz) | + \sum_{s\in
%         J_{u^\e\xz}\cap B\xz} |[u\xz](s)|\wedge 1\right) d\Ha^{d-1}(z)
%     \le \Lambda^\e(B)
%   \]
\begin{equation}\label{stimagbdclassica}
     \int_{\xi^\perp} \bigg( \int_{B\xz\setminus J_{(u^\e)\xz}} | e((u^\e)\xz) | dt  +  \# 
        (J_{(u^\e)\xz}\cap B\xz)  \bigg) d\Ha^{d-1}(z)
     \le \Lambda^\e(B)
   \end{equation}
   so that $(u^\e)_\e$ are equibounded in  $GBD( \Omega' )$, in the sense of \cite{DM13}.
    %the symmetric gradient of $u^\e$ is controlled by  
    
%   Furthermore we notice that by Lemma~\ref{le:convae} and the choice of $y^\e \in Q_\varepsilon$ made for the definition of $u^\e$,  we have that $u^\e\to u$  a.e.\ in $\Omega$. 
%   %[in what sense?
%  % $L^1$ for the truncated. But this needs to be shown.]

   Moreover, as $u^\e \to u$ in $L^0(\Omega; \R^d)$, by \cite[Theorem~1.1]{ChaCriIUMJ} we conclude that $u \in GBD( \Omega' )$. In fact, the  convergence of $u^\e$ to $u$ implies that the infinitesimal rigid motions can be taken as equal to 0 (alternatively, one could use \cite[Corollary 11.2 and Theorem~11.3]{DM13} together with the fact that since $u^\e\to u$ in $L^0(\Omega; \R^d)$, there is an increasing function $\psi_0\colon \R^+\to \R^+$ with $\lim_{s\to +\infty}\psi_0(s)=+\infty$ such that $\|\psi_0(u^\e)\|_{L^1(\Omega)}$ is uniformly bounded w.r.t.\ $\e>0$, see e.g.\ \cite[Lemma~2.1]{Fri19}).
 Then we conclude that $u \in GBD(\Omega)$ by the arbitrariness of $\Omega' \subset\subset\Omega$.  
  \medskip
\end{proof}
\begin{proof}[Proof of Corollary~\ref{cor:comp}] For any $u_k$ let $(u^\e_k)_{\e>0}$ be the approximating family provided by Theorem~\ref{thm:main}.
%  and fix $\Omega'\subset \subset \Omega$.  
Then for any $k \in \N \setminus \{0\}$ we can find $\e_k>0$ such that, for $\tilde{u}_k:=u^{\e_k}_k$ it holds that $\tilde{u}_k \in SBV^\infty(\Omega; \R^d)$  with $J_{\tilde{u}_k} \subset \partial^* \mathcal{B}^{\e_k}$ and $\mathcal{B}^{\e_k}$ union of finitely many cubes of sidelength $\e_k$ included in $\Omega$, such that
\begin{equation}\label{0202252211}
\begin{split}
\|\arctan(\tilde{u}_k-u_k)\|_{L^1}&\leq \frac{1}{k},\\
\int_{ \Omega_{\e_k} } | e(\tilde{u}_k)| dx +  \Ha^{d-1}(\partial^* \mathcal{B}^{\e_k} \cap \Omega_{\e_k} ) 
     &\le C\,  \Lambda^{V_k}_{u_k}.
\end{split}
\end{equation}
Recalling \eqref{stimagbdclassica}, it follows that the sequence $(\tilde{u}_k)_k$ satisfies the assumptions of \cite[Theorem~1.1]{ChaCriIUMJ} 
%(cf.\ \eqref{stimagbdclassica})  
on any $\Omega_{\e_{\overline k}}$ for $k\geq \overline{k}$,  and therefore by that compactness result  and by a diagonal argument on $\Omega_{\e_k}$,  up to a (not relabelled) subsequence, there are
a Caccioppoli partition $\mathcal{P}=(P_n)_n$ of $\Omega$, a sequence of piecewise rigid motions $(a_k)_k$ satisfying \eqref{eqs:0808222034}, and
$u\in GBD(\Omega)$ such that 
\begin{subequations}\label{eqs:0203200917'}
\begin{equation}\label{2202201910''}
\tilde{u}_k-a_k \to u \quad \text{a.e.\ in }\Omega. 
\end{equation}
\end{subequations}
By the first in \eqref{0202252211} we deduce \eqref{2202201910}.
At this stage, \eqref{eq:sciSaltoInf} follows arguing exactly as in \cite[Section~3.3: Lower semicontinuity]{ChaCriIUMJ}.
%that $u_k\to u$ a.e.\ in $\Omega$.
\end{proof}

%\BBB
%\begin{remark}
%Assuming in Corollary~\ref{cor:comp} $e^k$ non orthonormal, the same holds if by Remark~\ref{rem:nonorth} the same conclusion of  holds if th
%\end{remark}
%
%

\begin{proof}[Proof of Corollary~\ref{cor:GBVE}]
  We prove that $GBV^{\mathcal{E}}(\Omega; \R^d)\subset GBD(\Omega)$, the opposite inclusion being true by definition of $GBD(\Omega)$ in \cite{DM13}.
  The proof follows by averaging.
    We fix an orthonormal basis $(e_i)_{i=1}^d$, and let $\widehat{V} = \{e_i,i=1,\dots,d;
    \frac{e_i+e_j}{\sqrt{2}}, 1\le i<j\le d\}$.
    Then, we have that:
    \[
      \fint_{SO(d)}\sum_{\xi\in \widehat{V}} \Lambda_u^{R\xi} d\mu(R) =\frac{d(d+1)}{2}\fint_{\mathbb{S}^{d-1}} \Lambda_u^\xi d\Ha^{d-1}(\xi)
      %<+\infty
      \] (for $\mu$ the Haar measure over $SO(d)$), which is finite whenever
      $u\in GBV^{\mathcal{E}}(\Omega; \R^d)$. We then pick any rotation
      for which $\sum_{\xi\in R\,\widehat{V}}\Lambda_u^\xi$ is less than average and
      consider the basis $(Re_i)_{i=1}^d$ in Theorem~\ref{thm:main}. We conclude since $\Lambda_u^{R (e_i+e_j)}= \sqrt{2}\,\Lambda_u^{R (e_i+e_j)/\sqrt{2}}$.
      % follows from that on the directions $R\, \frac{e_i+e_j}{\sqrt{2}}$
\end{proof}

\begin{proof}[Proof of Proposition~\ref{prop:mainp}] We argue as in the proof of Theorem~\ref{thm:main}, with the measures $\mu\xz$ replaced by the measures
\begin{equation*}
(\mu^p)\xz:=|(\uxz)'|^p d \mathcal{L}^1 + \Ha^0\mres J_{\uxz},
\end{equation*}
% for $D^a \uxz=u'_{\xi,z}\, d \mathcal{L}^1$.
thus evaluating
\begin{multline*}
     \e^{d-1}  \int_{Q} \sum_{\xi\in V}\sum_{i\in\e\Z^d}\big(|\xi\cdot(u(\e y + i + \e\xi)-u(\e y+ i))|^p\wedge 1 \big) dy
      \le \sum_{\xi\in V}\int_{\xi^\perp}\int_{\Omega\xz} d\mu^p_{\xi, z }d\Ha^{d-1}(z) \le \Lambda_u^{p,V}.
   \end{multline*}
   Therefore the proof proceeds as done for Theorem~\ref{thm:main}, first choosing suitably $y^\e$ from the previous inequality and Proposition~\ref{prop:disc} in Appendix~\ref{sec:disc}, and then replacing $|\xi\cdot (u(\e y^\e + i + \e\xi)-u(\e y^\e + i ))|$  by $|\xi\cdot (u(\e y^\e + i + \e\xi)-u(\e y^\e + i) )|^p$, to define the approximating functions $u^\e$ converging in measure to $u$. We still control $\mathcal{B}^\e$, the union of cubes of the second type, as in \eqref{controllobadcubes}; on any $\overline{Q}_\e=\e y^\e+i+\e [0,1)^d$ % [-\frac12,\frac12]^d$
   of the first type, we have that
    \begin{equation*}
    \int_{\overline{Q}_\e} |e(u^\e)|^p dx\leq C \e^{d-1} \sum_{\xi\in V}|\xi\cdot(u(\e y^\e + i + \e\xi)-u(\e y^\e+ i))|^p 
    \end{equation*}
    by Proposition~\ref{prop:control} for $p>1$ and a change of variables. Summing up we obtain the bound \eqref{boundunifappp}.
\end{proof} 

 \begin{proof}[Proof of Corollary~\ref{cor:GSBDp}] For any $u_k$ let $(u^\e_k)_{\e>0}$ be the approximating family provided by Proposition~\ref{prop:mainp}.
%  and fix $\Omega'\subset \subset \Omega$.  
Then for any $k \in \N \setminus \{0\}$ we can find $\e_k>0$ such that, for $\tilde{u}_k:=u^{\e_k}_k$ it holds that $\tilde{u}_k \in SBV^\infty(\Omega; \R^d)$  with $J_{\tilde{u}_k} \subset \partial^* \mathcal{B}^{\e_k}$ and $\mathcal{B}^{\e_k}$ union of finitely many cubes of sidelength $\e_k$ included in $\Omega$, such that
\begin{equation}\label{0202252211'}
\begin{split}
\|\arctan(\tilde{u}_k-u_k)\|_{L^1}&\leq \frac{1}{k},\\
\int_{ \Omega_{\e_k} } | e(\tilde{u}_k)|^p dx +  \Ha^{d-1}(\partial^* \mathcal{B}^{\e_k} \cap \Omega_{\e_k} ) 
     &\le C\,  \Lambda^{p, V_k}_{u_k}.
\end{split}
\end{equation}
Then $(\tilde{u}_k)_k$ satisfies the assumptions of \cite[Theorem~1.1]{ChaCri20AnnSNS} 
%(cf.\ \eqref{stimagbdclassica})  
on any $\Omega_{\e_{\overline k}}$ for $k\geq \overline{k}$,  and therefore by that compactness result and by a diagonal argument on $\Omega_{\e_k}$, up to a (not relabelled) subsequence, there are
a Caccioppoli partition $\mathcal{P}=(P_n)_n$ of $\Omega$, a sequence of piecewise rigid motions $(a_k)_k$ satisfying \eqref{eqs:0808222034}, and
$u\in GSBD^p(\Omega)$ such that 
\begin{subequations}\label{eqs:0203200917''}
\begin{equation}\label{2202201910'''}
\tilde{u}_k-a_k \to u \quad \text{a.e.\ in }\Omega. 
\end{equation}
\end{subequations}
By the first in \eqref{0202252211'} we deduce \eqref{2202201910'}.
Eventually, we get \eqref{eq:convGradSym} and \eqref{eq:sciSalto} by \cite[Theorem~11.3]{DM13} (recall \cite[Lemma~2.1]{Fri19}) applied to $(u_k-a_k)_k$.
%that $u_k\to u$ a.e.\ in $\Omega$.
\end{proof}

%\antonin{
%\section{Remarks}
%
%Mention Compactness?
%
%Our construction also provides a rough approximation result (by $SBD$ functions).
%Then it provides a compactness result...
%}
 
\appendix
\section{Discrete energy estimate}\label{sec:energy}
In what follows, $(e_i)_{i=1}^d$ denotes the canonical basis
of $\R^d$.
\begin{lemma}\label{lem:rigid} Consider the unit cube $Q=[0,1]^d\subset\R^d$.
  Let $v\in (\R^d)^{\{0,1\}^d}$ be given at all vertices of $Q$ such
  that $v_i(x+e_i)=v_i(x)$ for any $x\in \{0,1\}^d$ with $x_i=0$
  and $v_i(x+e_i+e_j)+v_j(x+e_i+e_j)=v_i(x)+v_j(x)$ for any
  $x\in \{0,1\}^d$ with $x_i=x_j=0$. For $x\in Q$, we also denote by $v(x)$ the multilinear
  interpolation of the values $v$ at the vertices
  (affine on each $[x,x+e_i]$ for any $x\in Q$
  with $x_i=0$). Then $e(v)=0$ in $Q$ (so that, in fact, $v$ is affine with
  skew-symmetric gradient).
\end{lemma}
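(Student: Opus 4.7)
The plan is a direct verification based on the explicit multilinear representation $v_k(x) = \sum_{\sigma \in \{0,1\}^d} v_k(\sigma) \prod_{\ell=1}^d \phi_{\sigma_\ell}(x_\ell)$, with $\phi_0(t)=1-t$ and $\phi_1(t)=t$. My target is to show that, for every pair of indices $(i,j)$, the entry $(e(v))_{ij}$ vanishes identically on $Q$; once $e(v)\equiv 0$ on the connected open set $Q$ is established, the parenthetical conclusion that $v$ is an infinitesimal rigid motion is a classical consequence, since $v$ is smooth and hence $\nabla v$ is a constant skew-symmetric matrix on $Q$.

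First I would handle the diagonal part $\partial_i v_i$. The hypothesis $v_i(x+e_i)=v_i(x)$ for every vertex $x$ with $x_i=0$ lets me pair the two endpoints of every $e_i$-edge in the expansion above; using $\phi_0(x_i)+\phi_1(x_i)\equiv 1$, the resulting expression for $v_i$ becomes independent of $x_i$, so $\partial_i v_i\equiv 0$. Applying the same observation with the index $j$ shows that $v_j$ does not depend on $x_j$. For any $\tau\in\{0,1\}^{d-2}$, on the 2-face
\[
 F_\tau := \{x\in Q : x_k=\tau_k \text{ for all } k\ne i,j\},
\]
$v_i$ therefore reduces to an affine function of $x_j$ alone and $v_j$ to an affine function of $x_i$ alone, so both $\partial_i v_j$ and $\partial_j v_i$ are constant on $F_\tau$.

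Factoring out the frozen coordinates via the multilinear structure then gives
\[
 (\partial_i v_j+\partial_j v_i)(x) \;=\; \sum_{\tau\in\{0,1\}^{d-2}} \Big(\prod_{k\ne i,j}\phi_{\tau_k}(x_k)\Big)\, S_\tau,
\]
where $S_\tau$ denotes the constant value of the sum on $F_\tau$. A short computation on $F_\tau$ yields
\[
 S_\tau \;=\; \bigl(v_j(\tau+e_i)-v_j(\tau)\bigr)+\bigl(v_i(\tau+e_j)-v_i(\tau)\bigr).
\]
The diagonal hypothesis at $x=\tau$ reads $v_i(\tau+e_i+e_j)+v_j(\tau+e_i+e_j)=v_i(\tau)+v_j(\tau)$; substituting $v_i(\tau+e_i+e_j)=v_i(\tau+e_j)$ and $v_j(\tau+e_i+e_j)=v_j(\tau+e_i)$, which are valid applications of the two edge conditions (the base points $\tau+e_j$ and $\tau+e_i$ have $i$-th, respectively $j$-th, coordinate zero), collapses this identity to $S_\tau=0$. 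I do not expect any serious obstacle: the argument is just bookkeeping of the multilinear expansion, and the step worth highlighting is precisely the matching between the edge conditions and the diagonal hypothesis, which exhibits that the three families of assumptions in the lemma are exactly balanced to force $e(v)=0$.
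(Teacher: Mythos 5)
Your proof is correct and follows essentially the same route as the paper's: kill the diagonal entries via the edge conditions, reduce the off-diagonal entries $\partial_i v_j + \partial_j v_i$ to a two-dimensional computation on each 2-face $F_\tau$ using multilinearity, and collapse the resulting expression to zero by combining the edge and diagonal hypotheses. The paper separates out an explicit $d=2$ computation before invoking convex interpolation in higher dimensions, whereas you carry the index $\tau$ throughout, but the underlying cancellation is identical.
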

\begin{proof}
  We first assume $d=2$. Then,
  \[
    v(x) = (1-x_1)(1-x_2)v(0,0)+(1-x_1)x_2 v(0,1)+
    x_1(1-x_2)v(1,0)+x_1x_2 v(1,1).
  \]
  One easily sees that $\partial_1 v_1 = \partial_2 v_2=0$ everywhere.
  One has
  \begin{multline*}
    \partial_1v_2(x)+\partial_2 v_1(x)
    = (1-x_2)(v_2(1,0)-v_2(0,0))+x_2(v_2(1,1)-v_2(0,1))
    \\+ (1-x_1)(v_1(0,1)-v_1(0,0))+x_1(v_1(1,1)-v_1(1,0))
  \end{multline*}
  Since by assumption $v_1(0,0)=v_1(1,0)$, $v_1(0,1)=v_1(1,1)$,  $v_2(0,0)=v_2(0,1)$, $v_2(1,0)=v_2(1,1)$, this is also:
  \begin{multline*}
    \partial_1v_2(x)+\partial_2 v_1(x)
    = (1-x_2)(v_2(1,1)-v_2(0,0))+x_2(v_2(1,1)-v_2(0,0))
    \\+ (1-x_1)(v_1(1,1)-v_1(0,0))+x_1(v_1(1,1)-v_1(0,0))
    \\ = v_2(1,1)-v_2(0,0)+v_1(1,1)-v_1(0,0) = 0,
  \end{multline*}
  where the last equality follows again by assumption.
  Hence $e(v)=0$.
  
  Now, consider $d\ge 3$: then for any $\{i,j\}\subset \{1,\dots,d\}$,
  the case $d=2$ shows that $e(v)_{i,j}(x)=0$ for any $x$ with $(x_i,x_j)\in [0,1]^2$
  and $x_k\in \{0,1\}$, $k\not\in \{i,j\}$. Since at any other $x\in Q$,
  $e(v)_{i,j}(x)$ is a convex interpolation of those values, we find that $e(v)_{i,j}=0$ everywhere.
  Similarly, $e(v)_{i,i}(x) = \partial_i v_i(x)=0$ for any $x\in Q$ and $i\in\{1,\dots,d\}$.
  Hence $e(v)=0$.
\end{proof}
\begin{prop}\label{prop:control}
  Consider the unit cube $Q$, $v\in (\R^d)^{\{0,1\}^d}$ given at all vertices of $Q$,
  and the $d$-linear interpolation of $v$ inside $Q$. Then for every $p\in [1,+\infty)$  there is a constant $C>0$ depending on $d$ and $p$
  such that:
  \begin{multline}\label{eq:control}
    \int_Q |e(v)|^pdx \le C \Bigg( \sum_{i=1}^d
    \sum_{\begin{subarray}{c}x\in\{0,1\}^d\\ x_i=0\end{subarray}} |v_i(x+e_i)-v_i(x)|^p
    \\+ \sum_{i,j=1}^d \sum_{\begin{subarray}{c}x\in\{0,1\}^d\\ x_i=x_j=0\end{subarray}}
    |v_i(x+e_i+e_j)+v_j(x+e_i+e_j)-v_i(x)-v_j(x)|^p\Bigg)
  \end{multline}
\end{prop}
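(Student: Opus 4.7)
My plan is to view both sides of \eqref{eq:control} as $p$-th powers of seminorms on the finite-dimensional vector space $X := (\R^d)^{\{0,1\}^d}$ of vertex data, and then to deduce the inequality from the standard equivalence of seminorms in finite dimension. Set $N_1(v)^p := \int_Q |e(v)|^p\,dx$ and let $N_2(v)^p$ denote the expression in large brackets on the right-hand side of \eqref{eq:control}. Since the $d$-linear interpolation is linear in $v\in X$ and $e(\cdot)$ is linear, both $N_1$ and $N_2$ are seminorms on $X$, and \eqref{eq:control} amounts to the estimate $N_1 \le C^{1/p} N_2$ for some $C = C(d,p)$.

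The heart of the argument is to verify that $\ker N_2 \subset \ker N_1$. If $N_2(v)=0$, then every term of both sums on the right-hand side of \eqref{eq:control} must vanish: the first family gives $v_i(x+e_i)=v_i(x)$ for all $x\in\{0,1\}^d$ with $x_i=0$, and the second family gives $v_i(x+e_i+e_j)+v_j(x+e_i+e_j)=v_i(x)+v_j(x)$ for all $x\in\{0,1\}^d$ with $x_i=x_j=0$. These are precisely the hypotheses of Lemma~\ref{lem:rigid}, which then yields $e(v)\equiv 0$ on $Q$, hence $N_1(v)=0$.

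Once the kernel inclusion is in hand, both seminorms descend to the finite-dimensional quotient $X/\ker N_2$, on which $N_2$ becomes a norm and $N_1$ a seminorm. On such a space any seminorm is continuous with respect to any norm, so $N_1 \le C' N_2$ on the quotient for some $C'=C'(d,p)$ depending only on the structure of $X$ (whose dimension is $d\cdot 2^d$) and on $p$; lifting back gives the same bound on all of $X$. An equivalent route would be a direct compactness argument on the $N_2$-unit sphere in $X/\ker N_2$, ruling out any sequence $v_n$ with $N_1(v_n)=1$ and $N_2(v_n)\to 0$.

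I do not expect a substantive obstacle here: the nontrivial content has already been packaged into Lemma~\ref{lem:rigid}, and the remaining passage from kernel coincidence to the quantitative estimate is the routine equivalence of seminorms in finite dimension. The only mild care is in tracking that the constant depends only on $d$ and $p$, which is immediate from the finiteness of $\dim X$.
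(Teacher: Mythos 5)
Your proof is correct. It takes the route that the paper itself flags, at the end of its own proof, as an ``alternative proof'': viewing both sides as $p$-th powers of seminorms on the finite-dimensional space of vertex data, using Lemma~\ref{lem:rigid} to identify the kernel inclusion $\ker N_2\subset\ker N_1$, passing to the quotient, and invoking equivalence of (semi)norms in finite dimension. The paper's primary argument is instead a contradiction/compactness argument: normalize $\|e(v_n)\|_{L^p}=1$ with $N_2(v_n)\to 0$, subtract a rigid motion via the Poincar\'e--Korn inequality to obtain $L^p$ boundedness of $v_n'=v_n-a_n$, use finite-dimensionality of the interpolants to extract a convergent subsequence, and apply Lemma~\ref{lem:rigid} to the limit. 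Your version is a touch more economical in that it needs only $\ker N_2\subset\ker N_1$ (rather than the compactness machinery or Poincar\'e--Korn) and also avoids the need to even observe that rigid motions annihilate the right-hand side; the paper's compactness argument is more readily adaptable if one ever replaces the finite-dimensional interpolation space by an infinite-dimensional one. Both are legitimate, both hinge on the same identification of the kernel via Lemma~\ref{lem:rigid}, and both correctly yield a constant depending only on $d$ and $p$.
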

\begin{proof}
  If not, there is a sequence $(v_n)_{n\ge 1}$ which satisfies the reverse inequality,
  with $C$ replaced with $n$, arbitarily large. Renormalizing, we may assume that $\|e(v_n)\|_{L^p(Q)}=1$ and thus that
   the right-hand side of~\eqref{eq:control}, evaluated for $v_n$, goes
  to zero as $n\to\infty$.
  By Poincar\'e-Korn inequality, there is a sequence rigid motions (affine functions $a_n(x)=A_n x + b_n$ with
  skew-symmetric gradient $A_n$) such that:
  \[
    \int_Q |v_n(x)-a_n(x)|^p dx \le c\int_Q |e(v_n)|^p dx = c
  \]
  where $c$ depends only on the dimension  and on $p$.  Letting $v'_n= v_n-a_n$, it is bounded
  in $L^p(Q)$. Since it is, in fact, finite-dimensional (it is a polynomial of degree
  at most $d$), it is relatively compact and, up to a subsequence, converges to a limit $v'$
  with $\|e(v')\|_{L^p(Q)} = 1$. Yet the right-hand side of~\eqref{eq:control}, for $v'$,
  vanishes, so that $v'$ satisfies the
  assumptions of Lemma~\ref{lem:rigid} and we deduce $e(v')=0$. This is a contradiction,
  hence the corollary is true.
  An alternative proof consists in arguing
    that both left-hand and right-hand sides of~\eqref{eq:control} define
    $p$th-powers of norms on the finite dimensional space $(\R^d)^{\{0,1\}^d}/\sim$
    where $u\sim v$ if and only if the $d$-linear interpolation of $u-v$
    is an infinitesimal rigid motion.
\end{proof}
\section{Convergence of Discretizations}\label{sec:disc}

\newcommand{\Qq}{\mathcal{Q}}
Let $\Qq(x) = \chi_{[-1/2,1/2)^d}(x)$, $\Delta(x)=
\prod_{i=1}^d (1-|x_i|)^+$, for $x=(x_1,\dots,x_d)\in \R^d$, and for every $k \in \mathbb{N}$ let $T_k\colon \R^m\to \R^m$ be defined by
\begin{equation*}
T_k(t) = t \left(1 \wedge \frac{k}{|t|}\right).
\end{equation*}
We remark that for $u\in L^0(\R^d; \R^m)$
% with $|\mathrm{spt}\,u|<+\infty$ (or such that $|\{x\colon |u(x)|>k\}|<+\infty$ for some $k \in \N$), for $\mathrm{spt}\,u:=\{x\colon u(x)\neq 0\}$ the support of $u$,
it holds that for any $R>0$,  
\begin{multline}\label{hppartenza}
\lim_{\ell\to\infty} \big|\{x \in B_R(0): |u(x)|>\ell \}\big| = \Big|\bigcap_{\ell>0} \{x\in B_R(0) :|u(x)|>\ell\}\Big| \\ = \big|\{x\in B_R(0) :|u(x)|=+\infty\}\big|=0.
\end{multline}
% In order to guarantee \eqref{hppartenza} we will assume, for simplicity, that \BBB $u$ has compact essential support, denoting
% \begin{equation*}
% L^0_c(\R^d; \R^m):=\{ u \in L^0(\R^d; \R^m) \colon \text{there exists }R>0 \text{ such that }u=0 \text{ a.e.\ in }\R^d\setminus B_R(0)\}.
%\end{equation*}   
 %$|\mathrm{spt}\,u|<+\infty$. 
 Moreover, for any $\e>0$ and $y \in Q$ we denote by 
\[
\mathcal{D}^{\e}_y:=\{\e y+ z\colon z\in\e \Z^d\}, 
\]
% \qquad \mathcal{D}^{\e}_y(\Omega)=\{\e y+ z\colon z\in\e \Z^d,\, \e y + z + [-\e/2, \e/2)^d \subset \Omega\}
the discretization of $\R^d$ formed by cubes (with the same orientation) of sidelength $\e$ and anchor point $y$ and by $u^{\e}_{y}\colon \R^d \to \R^m$ the corresponding discretized function defined by
%\begin{equation}\label{uepsdiscr}
%u^{y}(x):=\sum_{\substack{z \in \e \Z^d(\Omega)\\ \e y + [-\e/2, \e/2)^d \subset \Omega}} u(\e y+ z) \Delta(\tfrac{x-x_z}{\e})
%\end{equation}  
%, \qquad \Z^d_{y,\e}(\Omega):=\{z \in \e \Z^d \colon \e y + z + [-\e/2, \e/2)^d \subset \Omega\}
%\antonin{SCUSA! qui non c'entra $\Omega$, avrei dovuto precisarlo. Si lavora in
%  tutta $\R^d$, prendendo $0$ fuori del supporto di $u$. Funzionerebbe
%  anche con $u$ a supporto infinito, aggiungendo $\chi_{B(0,k)}$ nei termini
%  troncati, forse va anche osservato. Ma penso che sarebbe meglio mai usare $\Omega$
%in questa parte}
\begin{equation}\label{uepsdiscr}
u^{\e}_y(x):=\sum_{\ol x \in \mathcal{D}^{\e}_y} u(\ol x) \Delta(\tfrac{x-\ol x}{\e}) \quad \text{for all }x\in \R^d.
\end{equation}  
In the following notation for the norms, we denote by $L^1(B_k)$ the space $L^1(B_k(0); \R^m)$, with $B_k \equiv B_k(0)\subset \R^d$ the ball of center 0 and radius $k$. 
\begin{lemma}\label{le:convtronc}
Let $u\in L^0(\R^d; \R^m)$
and assume that for all $\e>0$ there exists $y_\e \in Q$ such that
for any $k \in \N \setminus \{0\}$
\begin{equation}\label{limQ}
\lim_{\e\to 0} \Big\|\sum_{\ol x \in \mathcal{D}^{\e}_{y_\e}} T_k(u(\ol x))\Qq(\tfrac{\cdot-\ol x}{\e}) - T_k(u)\Big\|_{L^1(B_k)} \hspace{-0.7em}+ \Big\|\sum_{\ol x \in \mathcal{D}^{\e}_{y_\e}} T_k(u(\ol x))\Delta(\tfrac{\cdot -\ol x}{\e}) - T_k(u)\Big\|_{L^1(B_k)}=0.
\end{equation} 
Then 
\begin{equation}\label{eq:CVL1}
\lim_{\e\to 0} \|T_k(u^\e_{y_\e})-T_k(u)\|_{L^1(B_k)}=0 \quad\text{for any }k \in \N\setminus \{0\}.
\end{equation}
\end{lemma}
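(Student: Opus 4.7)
The plan is to compare $T_k(u^\e_{y_\e})$ with the two objects controlled by \eqref{limQ}: the interpolation of truncated values $U^\e_K := \sum_{\ol x} T_K(u(\ol x)) \Delta(\tfrac{\cdot-\ol x}{\e})$ and the piecewise constant $V^\e_K := \sum_{\ol x} T_K(u(\ol x)) \Qq(\tfrac{\cdot-\ol x}{\e})$, where the truncation level $K\ge k$ will be chosen large. The key observation is that on any grid cube whose $2^d$ vertices $\ol x$ all satisfy $|u(\ol x)| \le K$, the multilinear interpolations of $u(\ol x)$ and of $T_K(u(\ol x))$ coincide and stay in the ball of radius $K$; since $T_k\circ T_K = T_k$ for $K\ge k$, this forces $T_k(u^\e_{y_\e}) = T_k(U^\e_K)$ on such cubes. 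Consequently $T_k(u^\e_{y_\e})$ and $T_k(U^\e_K)$ can differ only on the ``bad set'' $E^\e_K$ consisting of cubes with at least one vertex $\ol x$ satisfying $|u(\ol x)| > K$, where the pointwise difference is bounded by $2k$.

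Using the telescoping identity
\[
T_k(u^\e_{y_\e}) - T_k(u) = (T_k(u^\e_{y_\e}) - T_k(U^\e_K)) + (T_k(U^\e_K) - T_k(u)),
\]
the second summand is controlled in $L^1(B_k)$ by $\|U^\e_K - T_K(u)\|_{L^1(B_k)}$ (since $T_k$ is $1$-Lipschitz and $T_k\circ T_K = T_k$), hence vanishes as $\e\to 0$ by the $\Delta$-part of \eqref{limQ}. The first summand is bounded by $2k\,|E^\e_K \cap B_k|$, and a simple count (each grid point is a vertex of $2^d$ cubes) gives
\[
|E^\e_K \cap B_k| \le 2^d\,|\{|V^\e|>K\}\cap B_{k+\sqrt{d}\,\e}|,
\]
where $V^\e := \sum_{\ol x} u(\ol x) \Qq(\tfrac{\cdot-\ol x}{\e})$ is the untruncated piecewise constant discretization; note that $T_K(V^\e) = V^\e_K$.

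The proof thus reduces to making $\limsup_{\e\to 0}|\{|V^\e|>K\}\cap B_{k+1}|$ arbitrarily small by taking $K$ large. I would first upgrade the $L^1(B_K)$ convergence $T_K(V^\e)\to T_K(u)$, valid for every $K$ by the $\Qq$-part of \eqref{limQ}, to convergence in measure $V^\e\to u$ on every ball $B_\rho$, via the standard inclusion
\[
\{|V^\e-u|>\delta\}\cap B_\rho \subset \{|u|>K-\delta\}\cap B_\rho \cup \{|T_K(V^\e)-T_K(u)|>\delta/2\}\cap B_\rho
\]
together with \eqref{hppartenza}. Then the bound $|\{|V^\e|>K\}\cap B_{k+1}| \le |\{|u|>K-1\}\cap B_{k+1}| + |\{|V^\e-u|>1\}\cap B_{k+1}|$ yields the required smallness by first choosing $K$ large and then $\e$ small. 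The main obstacle is the asymmetry of the hypothesis: it controls \emph{interpolations of truncations}, while the conclusion concerns the \emph{truncation of the interpolation}; both parts of \eqref{limQ} are needed --- the $\Delta$-part to compare on good cubes, and the $\Qq$-part to control, via the distribution function of $u$, the measure of the bad cubes.
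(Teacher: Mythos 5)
Your proof is correct and follows essentially the same strategy as the paper's. The decomposition into a ``good-cube'' comparison term, controlled by the $\Delta$-part of \eqref{limQ}, and a ``bad-cube'' term whose measure is estimated via the $\Qq$-part together with \eqref{hppartenza}, is exactly the paper's structure (your $U^\e_K$ is the paper's $\sum_{\ol x}T_\ell(u(\ol x))\Delta(\tfrac{\cdot-\ol x}{\e})$, and your bad set $E^\e_K$ is the paper's union of $2\e$-cubes centered at the bad lattice points). The only cosmetic difference is in how the $\Qq$-part is exploited: the paper argues pointwise within each bad cube that either $|u(x)|>\ell/2$ or $|T_\ell(u(\ol x))-T_\ell(u(x))|\ge\ell/2$ and then applies Chebyshev directly, whereas you first derive convergence in measure of the untruncated piecewise-constant discretization $V^\e$ to $u$ and then estimate $|\{|V^\e|>K\}\cap B_{k+1}|$; these are equivalent ways of packaging the same estimate.
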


\begin{proof}
Let us fix 
$k\in \N \setminus \{0\}$. Observe that if $\ell>k$, since $T_k = T_k\circ T_\ell$
and $T_k$ is $1$-Lipschitz,
\begin{equation}\label{eq:CVL01}
  \Big\|  T_k \Big(\sum_{\ol x} T_{\ell}(u(\ol x))\Delta(\tfrac{\cdot -\ol x}{\e})\Big)
    - T_k (u)
  \Big\|_{L^1(B_k)}
  \le
  \Big\|  \sum_{\ol x} T_{\ell}(u(\ol x))\Delta(\tfrac{\cdot-\ol x}{\e}) - T_\ell (u)
  \Big\|_{L^1(B_k)}\to 0
\end{equation}
% in $L^1(\R^d;\R^m)$ 
% for a.e.\ $x \in \R^d$ as $\e\to 0$,
% \antonin{in $L^1(B_k;\R^m)$ 
as $\e\to 0$, %% convergenza a.e. ce l'avremmo solo a meno di sottosuccessioni
thanks to \eqref{limQ}. This means we just need to bound
\[
\Big\|  T_k(u^\e_{y_\e})-
 T_k \Big(\sum_{\ol x} T_{\ell}(u(\ol x))\Delta(\tfrac{\cdot -\ol x}{\e})\Big)\Big\|_{L^1(B_k)}.
\]
% in $L^1(B_k(0);\R^m)$.
Observe that this quantity is zero, except possibly in the cubes with
center a point $\ol x$ such that $|u(\ol x)|>\ell$, in which case it is
bounded (in norm) by $2k$. Hence we get the bound
% its $L^1$-norm is bounded by:
\begin{equation}\label{eq:CVL02}
\Big\|  T_k(u^\e_{y_\e})-
 T_k \Big(\sum_{\ol x} T_{\ell}(u(\ol x))\Delta(\tfrac{\cdot-\ol x}{\e})\Big) \Big\|_{L^1(B_k)}
\hspace{-.5em}\le  2k (2\e)^d \# \{ \ol x \in \mathcal{D}^\e_{y_\e}\cap B_{k+\e\sqrt{d}} \colon |u(\ol x)|>\ell\}.
\end{equation}
For any $x\in \ol x+[-\e/2,\e/2)^d$ with $|u(\ol x)|>\ell$,  either
$|u(x)|>\ell/2$, or
\[  |T_\ell(u(\ol x))-T_\ell(u(x))|
  \ge |T_\ell(u(\ol x))|-|T_\ell(u(x))|=\ell-|u(x)|\ge \frac{\ell}{2}.
\]
%= |\pm \ell - u(x)|\ge\ell/2$ ( $|T_\ell(u(x_z))-T_\ell(u(x))|\ge\ell/2$  ).
Hence (for $\ell$ large enough),
\begin{multline*}
  \e^d \# \{ \ol x \in \mathcal{D}^\e_{y_\e} \cap B_{k+\e\sqrt{d}}\colon |u(\ol x)|>\ell\}
  \\
 \le \Big| \Big\{x \in B_{k+\e\sqrt{d}}: |u(x)|> \tfrac{\ell}{2} \Big\} \Big| +
\Big| \Big\{x\in B_\ell  :\Big|\sum_{\ol x} T_\ell(u(\ol x)) \Qq(\tfrac{x-\ol x}{\e})
- T_\ell(u(x))\Big|\ge \tfrac{\ell}{2}\Big\}\Big|
\\ \le
\Big| \Big\{x \in B_{k+\e\sqrt{d}}: |u(x)|> \tfrac{\ell}{2} \Big\} \Big| +
\frac{2}{\ell} \int_{B_\ell} \Big|\sum_{\ol x} T_\ell(u(\ol x)) \Qq(\tfrac{x-\ol x}{\e})
- T_\ell(u(x))\Big| dx.
\end{multline*}
We deduce from~\eqref{limQ}, \eqref{eq:CVL01}, and \eqref{eq:CVL02} that
for any $\ell>k$,
\[
  \limsup_{\e\to 0}
  \left\|T_k(u^\e_{y_\e})-T_k(u)\right\|_{L^1(B_k)}
  \le 2^{d+1} k \Big| \Big\{x \in B_{k+\e\sqrt{d}} : |u(x)|> \tfrac{\ell}{2} \Big\} \Big|.
\]
Sending $\ell\to\infty$,  by \eqref{hppartenza}  we obtain~\eqref{eq:CVL1}.
\end{proof}
Recalling the notation \eqref{uepsdiscr}, the following lemma holds.
\begin{lemma}\label{le:convae}
Let $u\in L^0(\R^d; \R^m)$ 
%$u\in L^0(\R^d; \R^m)$  with \BBB compact essential support  
%$|\mathrm{spt}\,u|<+\infty$ 
% $u\colon \R^d \to \R^m$ measurable 
and $(y_\e)_{\e>0}$ be such that \eqref{eq:CVL1} holds. 
% With the notation of Lemma~\ref{lem:disc}, if $y_\e \in Q_\e$ and $u^\e(x):=\sum_{\ol x} u(x_z) \Delta(\tfrac{x-x_z}{\e})$, it holds that
Then 
 \begin{equation}\label{convae}
u^\e_{y_\e} \to u \quad  \text{locally in } {L^0(\R^d; \R^m). }
\end{equation} 
\end{lemma}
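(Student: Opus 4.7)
The plan is to show that for every $R>0$ the functions $u^\e_{y_\e}$ converge to $u$ in Lebesgue measure on $B_R$. Since $L^0$-convergence on bounded sets is exactly convergence in measure, this will establish \eqref{convae}.

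Fix $R>0$ and $\delta>0$, and pick any integer $k\ge R$. I would split the bad set
\[
A_\e := \{x \in B_R : |u^\e_{y_\e}(x)-u(x)| > \delta\}
\]
into three contributions, according to whether $u$ or $u^\e_{y_\e}$ is above the threshold $k$. Set
\[
E_k := \{x\in B_R : |u(x)|>k\}, \qquad F_\e := \{x\in B_R : |u^\e_{y_\e}(x)|>k\}.
\]
On $B_R\setminus(E_k\cup F_\e)$ the truncation $T_k$ acts as the identity on both $u$ and $u^\e_{y_\e}$, so by Markov/Chebyshev and $B_R\subseteq B_k$,
\[
|A_\e\setminus(E_k\cup F_\e)| \le \tfrac{1}{\delta}\|T_k(u^\e_{y_\e})-T_k(u)\|_{L^1(B_k)} \xrightarrow[\e\to 0]{} 0
\]
by the assumption \eqref{eq:CVL1}. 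For $E_k$ there is nothing to do: by \eqref{hppartenza} applied to $u$ on $B_R$, $|E_k|\to 0$ as $k\to\infty$.

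The only non-routine part is controlling $F_\e$, and this is the main obstacle. I would reduce it to convergence of truncations at a \emph{lower} level: on $F_\e\cap\{|u|\le k/2\}$ one has $|T_k(u^\e_{y_\e})|=k$ while $|T_k(u)|\le k/2$, hence $|T_k(u^\e_{y_\e})-T_k(u)|\ge k/2$ pointwise there, giving
\[
|F_\e\cap \{|u|\le k/2\}| \le \tfrac{2}{k}\|T_k(u^\e_{y_\e})-T_k(u)\|_{L^1(B_k)} \xrightarrow[\e\to 0]{} 0.
\]
Splitting $|F_\e|\le |F_\e\cap\{|u|\le k/2\}| + |\{x\in B_R:|u(x)|>k/2\}|$ and letting $\e\to 0$ yields $\limsup_\e|F_\e|\le |\{x\in B_R:|u|>k/2\}|$.

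Combining the three estimates gives
\[
\limsup_{\e\to 0}|A_\e| \le |E_k| + |\{x\in B_R:|u(x)|>k/2\}|,
\]
and the right-hand side tends to $0$ as $k\to\infty$ by \eqref{hppartenza}. This proves $|A_\e|\to 0$ for every $\delta>0$ and $R>0$, which is exactly \eqref{convae}.
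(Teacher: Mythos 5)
Your proof is correct and follows essentially the same route as the paper: bound the measure of the bad set by Chebyshev on the truncated difference plus the measure of $\{|u|>k/2\}$, then let $\e\to 0$ and $k\to\infty$ using \eqref{hppartenza}. The paper packages the case analysis more compactly as a single implication (if $|u^\e_{y_\e}(x)-u(x)|>\eta$ and $|u(x)|\le k/2$ then $|T_k(u^\e_{y_\e}(x))-T_k(u(x))|>\eta$, using $k>2\eta$), whereas you make the sets $E_k$ and $F_\e$ explicit and handle $F_\e$ with a second application of Chebyshev at threshold $k/2$; both yield the same bound $\limsup_\e |A_\e|\le |E_k|+|\{|u|>k/2\}|$ up to harmless constants.
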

\begin{proof}
%Denoting
%\[
%u^\e(x):=\sum_z u(x_z) \Delta(\tfrac{x-x_z}{\e}),
%\]
%from \eqref{eq:CVL1} it follows that
%\begin{equation}\label{convae}
%u^\e \to u \quad \text{a.e.\ in } \Omega. 
%\end{equation}
    Fix $\eta>0$, $R>0$. For any $x$ and any $k\in \mathbb{N}\setminus \{0\}$, $k>2\eta$,
    it holds
    that if $|u^\e_{y_\e}(x)-u(x)|>\eta$ then either $|u(x)|>k/2$, or
    $|T_k(u^\e_{y_\e}(x)) - T_k(u(x))|>\eta$. Hence, for $k\ge R$,
    \begin{equation*}
      \big|\{x\in B_R(0): |u^\e_{y_\e}(x)-u(x)|>\eta\}\big|
      \le \big|\{x\in B_R(0):|u(x)|>\tfrac{k}{2} \}\big|
      +\frac{1}{\eta}\| T_k(u^\e_{y_\e}) - T_k(u) \|_{L^1(B_k)}.
    \end{equation*}
    Hence, for any fixed $k$,
    \[
      \limsup_{\e\to 0} \big|\{x\in B_R(0): |u^\e_{y_\e}(x)-u(x)|>\eta\}\big|\le \big|\{x\in B_R(0):|u(x)|>\tfrac{k}{2} \}\big|.
    \]
    The conclusion follows letting $k\to\infty$.
\end{proof}
% \BBB Fix $\eta>0$. For any $k \in \mathbb{N}\setminus \{0\}$ it holds that
%\begin{equation*}
%\{|u^\e_{y_\e}-u|>\eta\} \subset \{|u^\e_{y_\e}-u|>\eta,\, |u|\leq k-1, \, |u^\e_{y_\e}|\leq k\} \cup \{|u|\leq k-1,\, |u^\e_{y_\e}|>k\} \cup \{|u|> k-1\},
%\end{equation*}
%that
%\begin{equation*}
%|\{|u^\e_{y_\e}-u|>\eta,\, |u|\leq k-1, \, |u^\e_{y_\e}|\leq k\}  \cap B_k \,\eta< \| T_k(u^\e_{y_\e}) - T_k(u) \|_{L^1(B_k)},
%\end{equation*}
%as
%$|T_k(u^\e_{y_\e}(x)) - T_k(u(x))|=|u^\e_{y_\e}(x)-u(x)| > \eta$ for a.e.\ $x \in\{|u^\e_{y_\e}-u|>\eta,\, |u|\leq k-1, \, |u^\e_{y_\e}|\leq k\}$,
%and that \begin{equation*}
%| \{|u|\leq k-1,\, |u^\e_{y_\e}|>k\}  \cap B_k |\leq \|T_k(u^\e_{y_\e}) - T_k(u) \|_{L^1(B_k)},
%\end{equation*}
%as $|T_k(u^\e_{y_\e}(x)) - T_k(u(x))|\geq 1$ for a.e.\ $x \in \{|u|\leq k-1,\, |u^\e_{y_\e}|>k\}$.
%Moreover, for any $\gamma>0$,  by \eqref{hppartenza}  there exists $k \in \mathbb{N}$ such that
%\begin{equation*}
%|\{ |u|>k-1\}|< \gamma.
%\end{equation*}
%Collecting the previous inequalities and \eqref{eq:CVL1} we deduce that for any $\gamma>0$ there exists $\overline{\e}>0$ such that
%\begin{equation*}
%  |\{|u^\e_{y_\e}-u|>\eta\} \cap B_k |<\gamma \quad
%  \antonin{ \text{ for any }
%    % k \in \N \setminus\{0\},\,
%    \e \in (0,\overline{\e}),}
%\end{equation*}
%so \eqref{convae} follows by the arbitrariness of $k$. 
%

\begin{prop}\label{prop:disc}
   For any $u\in L^0(\R^d; \R^m)$
   %$u\in L^0(\R^d; \R^m)$  with \BBB compact essential support  
   % $|\mathrm{spt}\,u|<+\infty$ 
  % $u\colon \Omega \to \R^m$ measurable 
   and for all $\e>0$ there exists 
%  Let 
%  $u$ be a scalar ( needed?) measurable function supported on a bounded set
%  of $\R^d$, finite almost everywhere. 
%  Then one can find
  a measurable set $Q_\e\subset Q=[0,1)^d$ 
  %for all $\e>0$, 
  such that both
  $\lim_{\e\to 0} |Q_\e|=1$ and $u^\e_{y_\e} \to u$ locally in $L^0(\R^d; \R^m)$
   if 
%  that 
%  if one choses 
  $y_\e\in Q_\e$ for every $\e>0$.
%  and let $x_z=\e y_\e+z$ for all $z\in\e\Z$, then for any $k\ge 1$ the
%  convergence~\eqref{eq:CVL1} holds.
\end{prop}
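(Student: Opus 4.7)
The plan is to exploit Lemmas~\ref{le:convtronc} and~\ref{le:convae}, which together reduce the proposition to constructing, for each $\e>0$, a measurable $Q_\e\subset Q$ with $|Q_\e|\to 1$ such that every choice $y_\e\in Q_\e$ makes condition~\eqref{limQ} hold for every $k\in\N\setminus\{0\}$. Once this is done, Lemma~\ref{le:convtronc} delivers~\eqref{eq:CVL1} and then Lemma~\ref{le:convae} yields the local $L^0$-convergence claimed in the proposition.

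Denote by $\Phi_k(\e,y)$ the sum of the two $L^1(B_k)$-norms appearing on the left-hand side of~\eqref{limQ}. The central step is to show that, for each fixed $k$, $\int_Q \Phi_k(\e,y)\,dy\to 0$ as $\e\to 0$. For the $\Qq$-summand, I would write the $L^1$-norm explicitly, apply Fubini, and observe that for each fixed $x\in B_k$ the map $y\mapsto\tau(y,x):=(x-\ol x(y,x))/\e\in[-1/2,1/2)^d$, with $\ol x(y,x)\in\mathcal{D}^\e_y$ the unique lattice point whose cube contains $x$, is a measure-preserving bijection from $Q$ onto $[-1/2,1/2)^d$ (its inverse is $\tau\mapsto (x/\e-\tau)\bmod 1$). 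Hence the average reduces to
\[
\int_{[-1/2,1/2)^d}\int_{B_k}\bigl|T_k(u(x-\e\tau))-T_k(u(x))\bigr|\,dx\,d\tau,
\]
which vanishes as $\e\to 0$ by $L^1$-continuity of translations applied to $T_k(u)\in L^1_{\mathrm{loc}}(\R^d)$ and dominated convergence in $\tau$. The $\Delta$-summand is handled in the same spirit: since $\Delta\ge 0$ and $\sum_{\ol x\in\mathcal{D}^\e_y}\Delta((\cdot-\ol x)/\e)\equiv 1$, Jensen's inequality bounds the integrand pointwise by a convex combination of differences $|T_k(u(\ol x))-T_k(u(x))|$ over the $2^d$ nearest lattice points, and the analogous change of variable on $[-1,1]^d$ again reduces the average to an integral of translations of $T_k(u)$, which tends to zero.

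To handle all $k$ simultaneously while keeping a single set $Q_\e$, I would set
\[
g(\e,y):=\sum_{k=1}^\infty 2^{-k}\bigl(1\wedge\Phi_k(\e,y)\bigr),
\]
so that $0\le g\le 1$; by dominated convergence on the series, $\int_Q g(\e,y)\,dy\to 0$. Choosing $\delta_\e:=\bigl(\int_Q g(\e,y)\,dy\bigr)^{1/2}\to 0$ and $Q_\e:=\{y\in Q\colon g(\e,y)\le\delta_\e\}$, Chebyshev's inequality gives $|Q\setminus Q_\e|\le\delta_\e\to 0$, while any $y_\e\in Q_\e$ forces $\Phi_k(\e,y_\e)\to 0$ for every $k$, which is exactly hypothesis~\eqref{limQ}. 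The main technical step is the Fubini/measure-preserving observation underlying the average estimate; once it is in place, the weighted-series device to unify the individual $k$-bounds, together with Chebyshev, produces $Q_\e$ in a routine way.
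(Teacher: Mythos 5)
Your proposal is correct and follows essentially the same route as the paper: average the $L^1(B_k)$ discretization error over the anchor point $y\in Q$ to reduce it to a translation modulus of continuity of $T_k(u)$, sum a $2^{-k}$-weighted series in $k$, and select $Q_\e$ by Chebyshev, then invoke Lemmas~\ref{le:convtronc} and~\ref{le:convae}. The minor variations---handling the $\Qq$- and $\Delta$-terms separately via a measure-preserving shift and via convexity, instead of the paper's single estimate for an arbitrary partition-of-unity kernel $\psi$, and clipping $\Phi_k$ by $1\wedge\Phi_k$ to get domination for free rather than computing an explicit bound---are cosmetic.
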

\begin{proof}
 By Lemma~\ref{le:convtronc} and Lemma~\ref{le:convae}, it is enough to find a set $Q_\e$ such that
 \eqref{limQ} holds for any choice of $y_\e\in Q_\e$.
  For $y\in Q$, we let:
  \begin{multline*}
    \Phi_\e(y) = \sum_{k\ge 1} 2^{-k}
    \int_{ B_k } \Big|\sum_{\ol x \in \mathcal{D}^{\e}_{y}} T_k(u(\ol x))\Qq(\tfrac{\cdot-\ol x}{\e}) - T_k(u)\Big|
     + \Big|\sum_{\ol x \in \mathcal{D}^{\e}_{y}} T_k(u(\ol x))\Delta(\tfrac{\cdot-\ol x}{\e}) - T_k(u)\Big| dx.
  \end{multline*}
  For any bounded function 
  with compact support
  $v\in L^\infty(\R^d)$, and
  a continuous function $\psi:\R^d\to [0,1]$ with compact support
  such that $\sum_{z\in\Z^d}\psi(x-z)=1$ for any $x\in\R^d$,
  one has
  \begin{multline*}
    \int_Q \int \Big|\sum_{\ol x \in \mathcal{D}^{\e}_{y}} v(\ol x)\psi(\tfrac{x-\ol x}{\e}) - v(x)\Big|dx \,dy
    =
    \int \Big| \int_Q \sum_{\ol x \in \mathcal{D}^{\e}_{y}} (v(\ol x)-v(x))\psi(\tfrac{x-\ol x}{\e}) dy \Big|dx
    \\
    \stackrel{y'=\e y}{\le}
    \int  \int_{\e Q} \sum_{z\in \e\Z^d} |v(y'+z)-v(x)|\psi(\tfrac{x-z-y'}{\e}) \e^{-d}dy'\, dx
    = \int \int \e^{-d}\psi(\tfrac{x-y'}{\e}) |v(y')-v(x)|dy' dx
    \\ \stackrel{x-y'=\e\xi}{=}
    \int \psi(\xi) \|v(\cdot-\e\xi)-v\|_{L^1} d\xi
  \end{multline*}
  where we have set $\psi_\e:= \psi(\cdot/\e)\e^{-d}$.
  %For any $\xi$,
  %$\lim_{\e\to 0}\|v(\cdot-\e\xi)-v\|_{L^1}=0$ so that the above
  %expression goes to zero.

  Hence,  for $v=T_k(u) \chi_{B_k}$ and $\psi=\Qq,\, \Delta$, 
  \[
    \int_Q \Phi_\e(y) dy
    \le \sum_{k\ge 1} 2^{-k} \int (\Qq(\xi)+\Delta(\xi))\|T_k(u)(\cdot-\e\xi)-T_k(u)\|_{L^1( B_k )}
  \]
  Since the terms in the sum above are uniformly bounded 
%  (by $4 k2^{-k}(1+|\text{spt} u|)$ 
 (by $4 k2^{-k}(1+| B_k |)$ 
  for $\e$ small enough) and go to zero, we deduce that
  \[
  \lim_{\e\to 0} \int_Q\Phi_\e(y)dy=0.
  \]
  We can then choose $Q_\e := \Big\{ y\in Q: \Phi_\e(y) \le \sqrt{\int_Q \Phi_\e}\Big\}$,
  which is such that $|Q\setminus Q_\e|\le  \sqrt{\int_Q \Phi_\e}$.
  If one chooses $y_\e\in Q_\e$, then for each $k$,
  both $L^1$ norms in \eqref{limQ} are bounded by $2^k \sqrt{\int_Q \Phi_\e}$, which goes to zero.  Then we conclude by Lemma~\ref{le:convae}.
\end{proof}

\begin{remark} 
Proposition~\ref{prop:disc} implies that $u^\e_{y_\e}$ converge to $u$ a.e.\ in $\R^d$, up to a subsequence $\e_n \downarrow 0$. We remark that such convergence may be recovered directly, without using Lemma~\ref{le:convae}, by modifying the argument in the proof of Proposition~\ref{prop:disc} as follows.  
 % If we consider a subsequence $\e_n\downarrow 0$, we can strengthen the results.
%\antonin{
  With the notation in the proof of Proposition~\ref{prop:disc}, first let $\e_n$ such that $\sqrt{\int_Q\Phi_{\e_n}}\le 2^{-n}$.
  Then, letting $Q_n := \bigcap_{h\ge n} Q_{\e_h}$, we find that $|Q\setminus Q_n|\le 2^{-n+1}$
  and for any $y\in Q_n$ and $h\ge n$, $\Phi_{\e_h}(y)\le 2^{-h}$.
  Therefore, for $\tilde{Q}:=\bigcup_{n\ge 1} Q_n$, we find that $|\tilde{Q}|=1$ and for any $y\in \tilde{Q}$, 
  $\Phi_{\e_h}(y)\le 2^{-h}$ for $h$ large enough (depending on $y$).

  Next, once the main result is proven, we can choose a (further) subsequence
  $\e_k\downarrow 0$ such that for each $k$,
%  \[
%    \left\|T_k\Big(\sum_z u(x_z) \Delta(\tfrac{x-x_z}{\e})\Big)-T_k(u(x))\right\|_{L^1}\le 2^{-k}.    
%  \]
\[
    \|T_k(u^{\e_k}_{y_{\e_k}})-T_k(u)\|_{L^1( B_k )}\le 2^{-k}.    
  \]
  Then, $\sum_k |\big(T_k(u^{\e_k}_{y_{\e_k}}(x))-T_k(u(x))\big) \chi_{B_k}(x) |<+\infty$
  for a.e.~$x\in\R^d$, which shows that
  \[
    u^{\e_k}_{y_{\e_k}} \to u \quad \text{a.e.\ in }\R^d \text{ as }k\to +\infty.
  \]
%  almost everywhere. [Should we write a last Lemma? There is a subsequence
%  such that there exists $\tilde{Q}\subset Q$
%  of full measure such that for any $y_k\in \e_k$, etc, convergence $L^1$ of
%  the truncated and a.e.??]
 As a consequence, if one considers instead a \textit{subsequence}
    in Proposition~\ref{prop:disc}, one can replace $Q_\e$ by the
    set $\tilde{Q}$ of full measure, independent on $\e$,
    and assume that the convergence is almost everywhere.
\end{remark}
%}
\bigskip
\noindent {\bf Acknowledgements.} 
V.C.~acknowledges the support of the MUR - PRIN project 2022J4FYNJ CUP B53D23009320006 ``Variational methods for stationary and evolution problems with singularities and interfaces'', PNRR Italia Domani, funded by the European Union via the program NextGenerationEU. V.C. is member of the Gruppo Nazionale per l'Analisi Matematica, la Probabilità e le loro Applicazioni (INdAM-GNAMPA).
Most of this work was done while A.C.~was visiting the Dipartimento di Matematica,
Universit\`a di Roma--La Sapienza, thanks to the ``Dipartimento di Eccellenza Project CUP B83C23001390001''.


\begin{thebibliography}{10}

\bibitem{ADKT}
{\sc S.~Almi, E.~Davoli, A.~Kubin, and E.~Tasso}, {\em On {De Giorgi}'s
  conjecture of nonlocal approximations for free-discontinuity problems: The
  symmetric gradient case}, Preprint 2024, https://arxiv.org/abs/2410.23908.

\bibitem{AlmiTasso2}
{\sc S.~Almi and E.~Tasso}, {\em A general criterion for jump set slicing and
  applications}, Preprint 2022, https://arxiv.org/abs/2212.09822.

\bibitem{AlmiTasso}
{\sc S.~Almi and E.~Tasso}, {\em A new proof of compactness in {$G(S)BD$}},
  Advances in Calculus of Variations,  (2022).

\bibitem{Amb90GSBV}
{\sc L.~Ambrosio}, {\em Existence theory for a new class of variational
  problems}, Arch. Rational Mech. Anal., 111 (1990), pp.~291--322.

\bibitem{Amb94}
{\sc L.~Ambrosio}, {\em On the lower semicontinuity of quasiconvex integrals in
  {${\rm SBV}(\Omega,{\bf R}^k)$}}, Nonlinear Anal., 23 (1994), pp.~405--425.

\bibitem{AmbCosDM97}
{\sc L.~Ambrosio, A.~Coscia, and G.~Dal~Maso}, {\em Fine properties of
  functions with bounded deformation}, Arch. Rational Mech. Anal., 139 (1997),
  pp.~201--238.

\bibitem{AFP}
{\sc L.~Ambrosio, N.~Fusco, and D.~Pallara}, {\em Functions of bounded
  variation and free discontinuity problems}, Oxford Mathematical Monographs,
  The Clarendon Press, Oxford University Press, New York, 2000.

\bibitem{Bab15}
{\sc J.-F. Babadjian}, {\em Traces of functions of bounded deformation},
  Indiana Univ. Math. J., 64 (2015), pp.~1271--1290.

\bibitem{BabIurLem22}
{\sc J.-F. Babadjian, F.~Iurlano, and A.~Lemenant}, {\em Partial regularity for
  the crack set minimizing the two-dimensional griffith energy}, J. Eur. Math.
  Soc. (JEMS), 24 (2022), pp.~2443--2492.

\bibitem{CagChaSca20}
{\sc F.~Cagnetti, A.~Chambolle, and L.~Scardia}, {\em Korn and
  {P}oincar\'{e}-{K}orn inequalities for functions with a small jump set},
  Math. Ann., 383 (2022), pp.~1179--1216.

\bibitem{Chambolle-SBD2D}
{\sc A.~Chambolle}, {\em An approximation result for special functions with
  bounded deformation}, J. Math. Pures Appl. (9), 83 (2004), pp.~929--954.

\bibitem{Chambolle-SBDND}
\leavevmode\vrule height 2pt depth -1.6pt width 23pt, {\em Addendum to: ``{A}n
  approximation result for special functions with bounded deformation'' [{J}.
  {M}ath. {P}ures {A}ppl. (9) {\bf 83} (2004), no. 7, 929--954; mr2074682]}, J.
  Math. Pures Appl. (9), 84 (2005), pp.~137--145.

\bibitem{CCF16}
{\sc A.~Chambolle, S.~Conti, and G.~Francfort}, {\em Korn-{P}oincar\'{e}
  inequalities for functions with a small jump set}, Indiana Univ. Math. J., 65
  (2016), pp.~1373--1399.

\bibitem{CCI19}
{\sc A.~Chambolle, S.~Conti, and F.~Iurlano}, {\em Approximation of functions
  with small jump sets and existence of strong minimizers of {G}riffith's
  energy}, J. Math. Pures Appl. (9), 128 (2019), pp.~119--139.

\bibitem{CC}
{\sc A.~Chambolle and V.~Crismale}, {\em A density result in ${GSBD}^p$ with
  applications to the approximation of brittle fracture energies}, Arch.
  Rational Mech. Anal., 232 (2019), pp.~1329--1378.

\bibitem{CC-CalcVar}
\leavevmode\vrule height 2pt depth -1.6pt width 23pt, {\em Existence of strong
  solutions to the {D}irichlet problem for the {G}riffith energy}, Calc. Var.
  Partial Differential Equations, 58 (2019), pp.~27, No. 136.

\bibitem{CC-JEMS}
\leavevmode\vrule height 2pt depth -1.6pt width 23pt, {\em Compactness and
  lower semicontinuity in {$GSBD$}}, J. Eur. Math. Soc. (JEMS), 23 (2021),
  pp.~701--719.

\bibitem{ChaCri20AnnSNS}
\leavevmode\vrule height 2pt depth -1.6pt width 23pt, {\em Equilibrium
  configurations for nonhomogeneous linearly elastic materials with surface
  discontinuities}, Ann.~SNS Pisa Cl.~Sci., 24 (2023), pp.~1575--1610.

\bibitem{ChaCriIUMJ}
\leavevmode\vrule height 2pt depth -1.6pt width 23pt, {\em A general
  compactness theorem in \sc{$G(S)BD$}}, Indiana Univ. Math. J., Preprint 2022,
  in press (2025).

\bibitem{CFI19}
{\sc S.~Conti, M.~Focardi, and F.~Iurlano}, {\em Existence of strong minimizers
  for the {G}riffith static fracture model in dimension two}, Ann. Inst. H.
  Poincar\'{e} C Anal. Non Lin\'{e}aire, 36 (2019), pp.~455--474.

\bibitem{DM13}
{\sc G.~Dal~Maso}, {\em Generalised functions of bounded deformation}, J. Eur.
  Math. Soc. (JEMS), 15 (2013), pp.~1943--1997.

\bibitem{DGA}
{\sc E.~De~Giorgi and L.~Ambrosio}, {\em New functionals in the calculus of
  variations}, Atti Accad. Naz. Lincei Rend. Cl. Sci. Fis. Mat. Nat. (8), 82
  (1988), pp.~199--210 (1989).

\bibitem{DePRin16}
{\sc G.~De~Philippis and F.~Rindler}, {\em On the structure of
  {$\mathcal{A}$}-free measures and applications}, Ann. of Math. (2), 184
  (2016), pp.~1017--1039.

\bibitem{DePRin20}
\leavevmode\vrule height 2pt depth -1.6pt width 23pt, {\em Fine properties of
  functions of bounded deformation}, special issue on “Variational models in
  elasticity”, AIMS Math. Eng., 2 (2020).

\bibitem{FM}
{\sc G.~A. Francfort and J.-J. Marigo}, {\em Revisiting brittle fracture as an
  energy minimization problem}, J. Mech. Phys. Solids, 46 (1998),
  pp.~1319--1342.

\bibitem{FriPWKorn}
{\sc M.~Friedrich}, {\em A piecewise {K}orn inequality in {$SBD$} and
  applications to embedding and density results}, SIAM J. Math. Anal., 50
  (2018), pp.~3842--3918.

\bibitem{Fri19}
\leavevmode\vrule height 2pt depth -1.6pt width 23pt, {\em A compactness result
  in {$GSBV^p$} and applications to {$\Gamma$}-convergence for free
  discontinuity problems}, Calc. Var. Partial Differential Equations, 58
  (2019), pp.~31, No. 86.

\bibitem{FriLabSti23}
{\sc M.~Friedrich, C.~Labourie, and K.~Stinson}, {\em On regularity for
  griffith almost-minimizers in the plane}, Preprint 2023,
  https://arxiv.org/abs/2310.07670.

\bibitem{FriPerSol21}
{\sc M.~Friedrich, M.~Perugini, and F.~Solobrino}, {\em Lower semicontinuity
  for functionals defined on piecewise rigid functions and on \sc{GSBD}}, J.
  Funct. Anal., 280 (2021), pp.~45, No. 108929.

\bibitem{FriSol16}
{\sc M.~Friedrich and F.~Solombrino}, {\em Quasistatic crack growth in
  2d-linearized elasticity}, Ann. Inst. H. Poincaré Anal. Non Linéaire, 35
  (2018), pp.~27--64.

\bibitem{Iur14}
{\sc F.~Iurlano}, {\em A density result for {GSBD} and its application to the
  approximation of brittle fracture energies}, Calc. Var. Partial Differential
  Equations, 51 (2014), pp.~315--342.

\bibitem{Tem}
{\sc R.~Temam}, {\em Mathematical problems in plasticity}, Gauthier-Villars,
  Paris, 1985. Translation of Problèmes mathématiques en plasticité.
  Gauthier-Villars, Paris, 1983.

\bibitem{TemStr}
{\sc R.~Temam and G.~Strang}, {\em {Duality and relaxation in the variational
  problem of plasticity}}, J. Mécanique, 19 (1980), pp.~493--527.

\end{thebibliography}
\end{document}